\documentclass[12 pt]{article}%
\usepackage{amsmath, amsfonts, amsthm, color,latexsym}
\usepackage{amsmath}
\usepackage{amsfonts}
\usepackage{amssymb}
\usepackage{color, soul}
\usepackage[all]{xy}
\usepackage{graphicx}%
\setcounter{MaxMatrixCols}{30}
\providecommand{\U}[1]{\protect\rule{.1in}{.1in}}
\allowdisplaybreaks[4]
\newtheorem{theorem}{Theorem}[section]
\newtheorem{proposition}[theorem]{Proposition}
\newtheorem{corollary}[theorem]{Corollary}
\newtheorem{example}[theorem]{Example}
\newtheorem{examples}[theorem]{Examples}
\newtheorem{remark}[theorem]{Remark}

\newtheorem{lemma}[theorem]{Lemma}
\newtheorem{final remark}[theorem]{Final Remark}
\newtheorem{definition}[theorem]{Definition}
\textwidth=16.1cm
\textheight=23cm
\hoffset=-15mm
\voffset=-20mm
\allowdisplaybreaks[4]


\begin{document}

\title{Complete latticeability in vector-valued sequence spaces}
\author{Geraldo Botelho\thanks{Supported by CNPq Grant
304262/2018-8 and Fapemig Grant PPM-00450-17.}\,\, and  Jos\'e Lucas P. Luiz\thanks{Supported by a CNPq scholarship.\newline 2020 Mathematics Subject Classification: 46B42, 46B87, 46G25, 46B45.\newline Keywords: Banach lattices, regular polynomials, complete latticeability, positive polynomial Schur property.
}}
\date{}
\maketitle

\begin{abstract} First we adjust a technique due to Jim\'enez-Rodr\'iguez to prove the complete latticeability of the set of disjoint non-norm null weakly null sequences  and of the set of disjoint non-norm null regular-polynomially null sequences in Banach lattices. Then we apply the mother vector technique to prove the complete latticeability of $\lambda_\pi(E) \setminus \lambda_s(E)$, which implies the complete latticeability of $(\ell_p\widehat{\otimes}_{|\pi|}E)\setminus(\ell_p\widehat{\otimes}_{\pi}E)$, where $E$ is a Banach lattice and $1 < p < \infty$.
\end{abstract}

\section{Introduction}

In this paper we give a contribution to the fashionable subject of lineability, which is the search for linear structure inside nonlinear environments. The book \cite{aron1} is a very good reference for the state of the art in lineability. Recently, Oikhberg \cite{oikhberg} pushed the Banach lattice setting into this subject and coined the following terms: a subset $A$ of a Banach lattice is {\it latticeable} ({\it completely latticeable}) if there exists a (closed) infinite dimensional sublattice of $E$ all of whose elements but the origin belong to $A$ (see also \cite{oikhberg2}).

Our contribution to latticeability is splitted into two sections. Among tons of lineability-type results which have appeared in the last years, our focus in Section \ref{section2} is the following result (actually its proof) proved by Jim\'enez-Rodr\'iguez \cite{jimenez}: if $E$ is a Banach space failing the Schur property, then the set of non-norm null weakly null $E$-valued sequences contains, except for the origin, a closed infinite-dimensional subspace of $c_0^w(E)$, which is the closed subspace of $\ell_\infty(E)$ formed by weakly null sequences. We solve in Section \ref{section2} the following three questions that arise naturally from Jim\'enez-Rodr\'iguez' result (precise definitions will be given in due time): \\
(i) If $E$ is a non-polynomially Schur Banach space, then the set of non-norm null polynomially null sequences contains, up to the origin, a closed infinite dimensional subspace of $c_0^w(E)$?\\
(ii) If $E$ is a Banach lattice failing the positive Schur property, then the set of disjoint non-norm null weakly null $E$-valued sequences is latticeable in $\ell_\infty(E)$? Completely latticeable?\\
(iii) If $E$ is a non-positively polynomially Schur Banach lattice, then the set of disjoint non-norm null regular-polynomially null $E$-valued sequences is latticeable in $\ell_\infty(E)$? Completely latticeable?

Let $E$ be a Banach lattice and let $\lambda$ be a scalar-valued sequence space. In Section \ref{section3} we turn our attention to the Banach lattice $\lambda_\pi(E)$ of $E$-valued sequences and its subspace $\lambda_s(E)$ introduced in \cite{bu2} and developed in, e.g., \cite{botelho1,bu1,bu2,bu5,bu3}. We use the mother vector technique from \cite{geraldo} to prove that, whenever nonempty,  $\lambda_\pi(E)\setminus \lambda_s(E)$ is completely latticeable. As consequences we prove, for $1 < p < \infty$, the complete latticeability of the set of sequences in $\ell_p^\pi(E)$ that are not Cohen strongly $p$-summable, a related result for Orlicz vector-valued sequence lattices and the latticeability of the complement of the projective tensor product $\ell_p\widehat{\otimes}_{\pi}E$ in the positive projective tensor product $\ell_p\widehat{\otimes}_{|\pi|}E$.

All sequence spaces in this paper are considered as Banach lattices with the coordinatewise order. By $B_E$ we denote the closed unit ball of the Banach space $E$. For the general theory of Banach lattices we refer to \cite{aliprantis2, meyer}, for regular homogeneous polynomials in Banach lattices we refer to \cite{bu, loane2, loane1}, and for sequence spaces we refer to \cite{gupta,lindenstrauss}. By a disjoint sequence in a Riesz space we mean a pairwise disjoint sequence.

\section{Non-norm null disjoint sequences}\label{section2}

Some terminology is needed to describe and state the results we prove in this section.

Polynomially Schur spaces were introduced by Carne, Cole and Gamelin \cite{carne} and have been developed by several authors (see, e.g, \cite{aron, arranz, farmer,jaramillo}). A sequence $(x_j)_{j=1}^\infty$ in a Banach space $E$  is {\it polynomially null} if $P(x_j) \longrightarrow 0$ for every scalar-valued continuous homogeneous polynomial $P$ on $E$. A Banach space $E$ is {\it polynomially Schur} if every polynomially null $E$-valued sequence is norm null.

A subset $A$ of a topological vector space $E$ is {\it spaceable} (see \cite{aron1}) if there exists a closed infinite dimensional subspace of $E$ all of whose elements but the origin belong to $A$. The result due to Jim\'enez-Rodr\'iguez mentioned in the Introduction, which  is a spaceability result in $c_0^w(E)$, makes question (i) of the Introduction quite natural.
In Remark \ref{remfim} we shall explain why we cannot go to a space smaller than $c_0^w(E)$.

The {\it positive Schur property} in Banach lattices (positive -- or, equivalently,  disjoint -- weakly null sequences are norm null) was introduced by Wnuk \cite{wnuk1} and R\"abiger \cite{rabiger} and has been extensively studied, for some recent developments see \cite{arkadani, baklouti, botelho, chen, tradacete, wnuk4, zeekoei}. Question (ii) of the Introduction it nothing but the natural lattice counterpart of the problem solved by Jim\'enez-Rodr\'iguez in the Banach space setting.

A sequence $(x_j)_{j=1}^\infty$ in a Banach lattice $E$  is {\it regular-polynomially null} if $P(x_j) \longrightarrow 0$ for every scalar-valued regular homogeneous polynomial $P$ on $E$. The following class of Banach lattices was studied in \cite{joselucassegundo}: a Banach lattice $E$ is {\it positively polynomially Schur} if positive regular-polynomially null $E$-valued sequences are norm null. It is inevitable to wonder if the set of disjoint non-norm null regular-polynomially null  sequences in a non-positively polynomially Schur Banach lattice is (completely) latticeable in $\ell_\infty(E)$. This is the scope of question (iii).

In this section we show that the Jim\'enez-Rodr\'iguez technique can be adjusted to solve these three questions affirmatively. We will justify why we have to work with $\ell_\infty(E)$ in questions (ii) and (iii) (see Remark \ref{remfim}). We also establish the complete latticeability in $c_0(E)$ of the set of disjoint norm null sequences in arbitrary infinite dimensional Banach lattices.

Recall that $\ell_\infty(E)$ is a Banach lattice with the coordinatewise order whenever $E$ is a Banach lattice \cite[p.\,177]{aliprantis2} and that $c_0^w(E)$ is a closed subspace of $\ell_\infty(E)$ whenever $E$ is a Banach space \cite[p.\,33]{djt}.

\begin{theorem}\label{mainth} {\rm (a)} Let $E$ be a non-polynomially Schur Banach space. Then the set of $E$-valued non-norm null polynomially null sequences is spaceable in $c_0^w(E)$.

\medskip

\noindent {\rm (b)} Let $E$ be a Banach lattice failing the positive Schur property. Then the set of $E$-valued disjoint non-norm null weakly null sequences is completely latticeable in $\ell_\infty(E)$.

\medskip

\noindent{\rm (c)} Let $E$ be a non-positively polynomially Schur Banach lattice. Then the set of $E$-valued disjoint non-norm null regular-polynomially null sequences is completely latticeable in $\ell_\infty(E)$.
\end{theorem}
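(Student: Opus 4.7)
The plan is to give a single unified construction covering all three parts by adapting the Jim\'enez-Rodr\'iguez partition technique. Fix once and for all a decomposition $\mathbb{N}=\bigsqcup_{k=1}^{\infty} N_{k}$ into infinitely many pairwise disjoint infinite subsets and let $k(n)$ be the unique index with $n\in N_{k(n)}$. In each of the three cases, extract from the hypothesis a bounded sequence $(x_{n})\subset E$ with $\|x_{n}\|\geq \delta>0$: in (a) a polynomially null sequence; in (b) a positive, pairwise disjoint, weakly null sequence (available because the positive Schur property is equivalent to its disjoint version); in (c) a positive, pairwise disjoint, regular-polynomially null sequence. Define $y^{k}\in\ell_{\infty}(E)$ by $y^{k}_{n}=x_{n}$ if $n\in N_{k}$ and $y^{k}_{n}=0$ otherwise, and set
\[
\Phi(\alpha)_{n}=\alpha_{k(n)}\,x_{n}\qquad(\alpha=(\alpha_{k})\in\ell_{\infty},\; n\in\mathbb{N}),
\]
which is a well defined element of $\ell_{\infty}(E)$ because the supports $N_{k}$ are pairwise disjoint, and which satisfies $\Phi(\alpha)=\sum_{k}\alpha_{k}y^{k}$ coordinatewise.

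A direct estimate yields $\delta\|\alpha\|_{\infty}\leq \|\Phi(\alpha)\|_{\ell_{\infty}(E)}\leq \sup_{n}\|x_{n}\|\cdot \|\alpha\|_{\infty}$, so $\Phi\colon\ell_{\infty}\to\ell_{\infty}(E)$ is an isomorphic embedding with closed infinite dimensional range. If $\alpha\neq 0$, pick $k_{0}$ with $\alpha_{k_{0}}\neq 0$; then $\|\Phi(\alpha)_{n}\|\geq |\alpha_{k_{0}}|\delta$ for every $n\in N_{k_{0}}$, so $\Phi(\alpha)$ is non-norm null. For any continuous scalar-valued homogeneous polynomial $P$ of degree $d\geq 1$,
\[
|P(\Phi(\alpha)_{n})|=|\alpha_{k(n)}|^{d}|P(x_{n})|\leq \|\alpha\|_{\infty}^{d}|P(x_{n})|\longrightarrow 0,
\]
which yields polynomial nullity in (a) (arbitrary continuous homogeneous $P$), weak nullity in (b) (the $d=1$ case, $P=\varphi\in E^{*}$), and regular-polynomial nullity in (c) (with $P$ regular). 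Disjointness of the entries of $\Phi(\alpha)$ in (b) and (c) follows from that of $(x_{n})$, since $|\alpha_{k(n)}x_{n}|\wedge|\alpha_{k(m)}x_{m}|=|\alpha_{k(n)}||\alpha_{k(m)}||x_{n}|\wedge|x_{m}|=0$ for $n\neq m$.

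In parts (b) and (c), the positivity of the $x_{n}$ makes $\Phi$ a lattice homomorphism, since for $\alpha,\beta\in\ell_{\infty}$,
\[
\Phi(\alpha\vee\beta)_{n}=(\alpha_{k(n)}\vee\beta_{k(n)})x_{n}=(\alpha_{k(n)}x_{n})\vee(\beta_{k(n)}x_{n})=\Phi(\alpha)_{n}\vee\Phi(\beta)_{n}.
\]
Hence $\Phi(\ell_{\infty})$ is a closed infinite dimensional sublattice of $\ell_{\infty}(E)$ entirely (save the origin) contained in the target set, proving complete latticeability. Part (a) is similar but needs no lattice homomorphism check; one only verifies $\Phi(\ell_{\infty})\subseteq c_{0}^{w}(E)$, which holds because for every $\varphi\in E^{*}$ we have $|\varphi(\Phi(\alpha)_{n})|\leq\|\alpha\|_{\infty}|\varphi(x_{n})|\to 0$ by polynomial (and hence weak) nullity of $(x_{n})$.

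The main obstacle is producing the starting sequence in case (c): the hypothesis only delivers a positive regular-polynomially null non-norm null sequence $(z_{n})$, with no built-in disjointness. Since every continuous linear functional on a Banach lattice is regular, $(z_{n})$ is in particular positive and weakly null, so that a disjointification argument of Kadec-Pe\l czy\'nski type should produce a pairwise disjoint positive sequence $(d_{k})$ with $\|z_{n_{k}}-d_{k}\|\to 0$ along some subsequence. The Lipschitz estimate $|P(u)-P(v)|\leq C_{P}\|u-v\|(\|u\|+\|v\|)^{\deg P-1}$, valid for continuous homogeneous polynomials on bounded sets, then transfers regular-polynomial nullity from $(z_{n_{k}})$ to $(d_{k})$. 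This transfer, and the existence of a disjointification compatible with the regular-polynomial structure, is the step I expect to require the most care; the analogous first step in (b) is classical via Wnuk's equivalence.
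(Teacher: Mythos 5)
Your construction is, up to notation, the paper's own: the Jim\'enez-Rodr\'iguez map $T((a_n)_{n=1}^\infty)=(a_{F(j+1)}x_j)_{j=1}^\infty$ used there is exactly your $\Phi$ for the partition $N_k=\{j:\ F(j+1)=k\}$ (each $N_k$ is infinite), and your verifications for (a) and (b) --- the lower estimate $\delta\|\alpha\|_\infty\le\|\Phi(\alpha)\|_{\ell_\infty(E)}$, non-norm nullity along $N_{k_0}$, the identity $|P(\Phi(\alpha)_n)|=|\alpha_{k(n)}|^d|P(x_n)|\to 0$, the Riesz homomorphism property via positivity of the $x_n$, and Wnuk's reduction to a positive disjoint weakly null non-norm null sequence in (b) --- coincide with the paper's argument. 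One cosmetic slip: $|\alpha_{k(n)}x_n|\wedge|\alpha_{k(m)}x_m|$ is not equal to $|\alpha_{k(n)}|\,|\alpha_{k(m)}|\,(|x_n|\wedge|x_m|)$ as a lattice identity; but disjointness is preserved under scalar multiples, so the conclusion you need is still correct.

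The genuine gap is the starting sequence in (c), which you yourself flag. Failing the positive polynomial Schur property only provides a positive regular-polynomially null sequence $(z_n)$ that is not norm null, and you must still make it disjoint. Your proposed route --- a Kadec--Pe\l czy\'nski-type splitting giving disjoint $d_k$ with $\|z_{n_k}-d_k\|\to 0$, followed by a Lipschitz estimate for polynomials on bounded sets --- does not work in an arbitrary Banach lattice: norm-approximation of a (positive) weakly null sequence by a disjoint sequence is a phenomenon tied to order continuity of the norm or similar hypotheses, none of which is assumed here, so the Lipschitz transfer never gets off the ground. The paper instead invokes Proposition 2.4 of the companion paper on the positive polynomial Schur property, whose mechanism is domination rather than approximation: the classical disjointification of a positive weakly null non-norm null sequence produces disjoint vectors $0\le d_k\le z_{n_k}$ with $\|d_k\|$ bounded away from zero, and since positive homogeneous polynomials are monotone on the positive cone, $0\le P(d_k)\le P(z_{n_k})\to 0$ for every positive $P$; as regular polynomials are differences of positive ones, $(d_k)$ is regular-polynomially null, positive, disjoint and not norm null. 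With that replacement for your disjointification step, your argument for (c) closes exactly as in the paper.
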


\begin{proof} We start with the construction due to Jim\'enez-Rodr\'iguez \cite{jimenez} which will be used in the three proofs. Let $E$ be a Banach space, $\varepsilon > 0$ and $(x_j)_{j=1}^\infty\subset B_E$ be a sequence such that $\|x_j\|\geq \varepsilon$ for every $j \in \mathbb{N}$. Consider the set of prime numbers ${\frak P} =\{p_k: k\in\mathbb{N}\}$ increasingly ordered, the surjective function
\begin{align*} F\colon \mathbb{N}\setminus\{1\}&\longrightarrow \mathbb{N}~,~f(m) = k,~{\rm where~} p_k=\min\{p \in {\frak P} : p\,|m\},
\end{align*}
and the map
\begin{align*}
    T\colon \ell_\infty\longrightarrow \ell_\infty(E)~,~T\left((a_n)_{n=1}^\infty\right)= \left(a_{F(j+1)}x_j\right)_{j=1}^\infty,
\end{align*}
that is,  $T\left((a_n)_{n=1}^\infty\right)_j=a_{F(j+1)}x_j$ for every $j\in\mathbb{N}$. An easy adaptation of the arguments of \cite[Theorem 2.1]{jimenez} yield that  $T$ is a well defined into isomorphism and that the nonzero elements of its range are non-norm null sequences. The range of $T$ will be the space/lattice we are looking for.

\medskip

\noindent (a) We can start with a non-norm null polynomially null sequence  $(x_j)_{j=1}^\infty$ in $E$. Passing to a subsequence and normalizing if necessary, we can suppose that $(x_j)_{j=1}^\infty\subset B_E$ and that there is $\varepsilon>0$ such that $\|x_j\|\geq \varepsilon$ for every $j\in\mathbb{N}$. All that is left to prove is that the elements of the range of $T$ are polynomially null $E$-valued sequences. This is true because, given $(a_{F(j+1)}x_j)_{j=1}^\infty\in T(\ell_\infty)$ with $(a_j)_{j=1}^\infty\in\ell_\infty$, $n\in\mathbb{N}$ and $P\in\mathcal{P}(^n E)$, since $P(x_j)\longrightarrow 0$ and $\{a_{F(j+1)} : j \in \mathbb{N}\}=\{a_n : n \in \mathbb{N}\}$ is a bounded set, we have  $$|P(a_{F(j+1)}x_j)|=|a_{F(j+1)}|^n\cdot|P(x_j)|\longrightarrow 0.$$
In particular, the range of $T$ lies in $c_0^w(E)$.

\medskip

\noindent(b) In this case we can start with a positive weakly null non-norm null sequence $(x_j)_{j=1}^\infty$ in $E$. By \cite[p.\,16]{wnuk} we can suppose that this sequence is disjoint and, as in the proof of (a), that  $(x_j)_{j=1}^\infty\subset B_E$ and that there is $\varepsilon>0$ such that $\|x_j\|\geq \varepsilon$ for every $j\in\mathbb{N}$. From the proof of \cite[Theorem 2.1]{jimenez} we know that the elements of the range of $T$ are weakly null sequences. As these elements are of the form $(a_{F(j+1)}x_j)_{j=1}^\infty$ for some $(a_n)_{n=1}^\infty\in\ell_\infty$ and the sequence $(x_j)_{j=1}^\infty$ is disjoint, from \cite[Lemma 1.9(1)]{aliprantis1} we conclude that the range of $T$ is formed by disjoint sequences. To finish the proof of this case, let us see that $T$ is Riesz homomorphism: given $(a_n)_{n=1}^\infty$ and $(b_n)_{n=1}^\infty$ in $\ell_\infty$, it holds $(a_n)_{n=1}^\infty\land(b_n)_{n=1}^\infty=(a_n\land b_n)_{n=1}^\infty$ and since $x_j \geq 0$ for every $j$, we have
$$(a_{F(j+1)}\land b_{F(j+1)})x_j=(a_{F(j+1)}x_j)\land (b_{F(j+1)}x_j)$$ for every $j\in\mathbb{N}$. It follows that $T((a_n)_{n=1}^\infty\land(b_n)_{n=1}^\infty)=T((a_n)_{n=1}^\infty)\land T((b_n)_{n=1}^\infty)$, proving that $T$ is a  Riesz homomorphism, hence its range is a closed sublattice of $\ell_\infty(E)$ lattice isomorphic to $\ell_\infty$.

\medskip

\noindent(c) According to \cite[Proposition 2.4]{joselucassegundo} we can start with a positive disjoint non-norm null regular-polynomially null sequence $(x_j)_{j=1}^\infty$ in $E$. Like we have done above, we can suppose that $(x_j)_{j=1}^\infty\subset B_E$ and that there is $\varepsilon>0$ such that $\|x_j\|\geq \varepsilon$ for every $j\in\mathbb{N}$. 
As we did in the proof of (b), the fact that the sequence $(x_j)_{j=1}^\infty$ is positive and disjoint guarantees that $T$ is a Riesz homomorphism, therefore its range is a closed sublattice of $\ell_\infty(E)$, and that the elements of the range are disjoint sequences. As we did in the proof of (a), the fact that the sequence $(x_j)_{j=1}^\infty$ is regular-polynomially null implies that the sequences in the range of $T$ are regular-polynomially null as well.
\end{proof}

Note that, although regular-polynomially null sequences are weakly null, Theorem \ref{mainth}(b) does not follow from (c) because there are positively polynomially Schur Banach lattices failing the positive Schur property, for instance, $\ell_p$ with $1< p < \infty$ (see \cite{joselucassegundo}).

It is natural to wonder if the closed sublattices of $\ell_\infty(E)$ obtained in the proofs of (b) and (c) above are ideals in $\ell_\infty(E)$. Next example shows that this is not the case in general, making clear that this is a direction that cannot be pursued using the Jim\'enez-Rodr\'iguez technique.

\begin{example}\rm The most favorable situation we can imagine for $T(\ell_\infty)$ to be an ideal of $\ell_\infty(E)$ occurs when the starting sequence $(x_j)_{j=1}^\infty$ is formed by atoms of $E$. In this example we show that, even in this case, $T(\ell_\infty)$ may fail to be an ideal of $\ell_\infty(E)$. We start with the sequence $(e_j)_{j=1}^\infty$ of canonical unit vectors in $c_0$, which is a positive disjoint non-norm null weakly null sequence formed by atoms. By \cite[Proposition 1.59]{dineen} this sequence is also regular-polynomially null. Consider the corresponding operator
\begin{align*}
    T\colon \ell_\infty\longrightarrow \ell_\infty(c_0)~,~T\left((a_n)_{n=1}^\infty\right)= \left(a_{F(j+1)}e_j\right)_{j=1}^\infty,
\end{align*}
the positive vector $(e_1,0,0,\ldots)\in\ell_\infty(c_0)$ and the sequence  $e_1\in\ell_\infty$. On the one hand,
$$0\leq(e_1,0,0,\ldots)\leq (e_1, 0, e_3, 0, e_5, 0,\ldots) = T(e_1)$$
  in $\ell_\infty(c_0)$.  On the other hand, there is no element  $(b_n)_{n=1}^\infty\in\ell_\infty$ such that $T((b_n)_{n=1}^\infty)=(e_1,0,0,\ldots)$. Indeed, supposing that such a sequence   $(b_n)_{n=1}^\infty$ exists, by the definition of $T$ we would have
$$(e_1, 0, 0, \ldots ) = T((b_n)_{n=1}^\infty) = (b_1e_1, b_2e_2, b_1e_3, \ldots), $$
which gives $1 = b_1 = 0$. This contradiction proves that
$T(\ell_\infty)$ is not an ideal in $\ell_\infty(E)$.
\end{example}

\begin{remark}\rm \label{remfim} 
(i) We cannot use $c_0^w(E)$ instead of $\ell_\infty(E)$ in Theorem \ref{mainth}(b) and (c) because $c_0^w(E)$ is not always a sublattice of $\ell_\infty(E)$. For instance, for $1 \leq p < \infty$, $c_0^w(L_p[0,1])$ is not a Riesz space due to the fact that the lattice operations in $L_p[0,1]$ are not weakly sequentially continuous \cite[Example, p.\,114]{meyer}.  But it is clear that the sublattices of $\ell_\infty(E)$ created in Theorem \ref{mainth}(b) and (c) are contained in $c_0^w(E)$. Sometimes $c_0^w(E)$ is a Banach lattice, for instante when $E$ is either an AM-space or an atomic Banach lattice with order continuous norm (see \cite[Theorem 12.30]{aliprantis2} and \cite[Proposition 2.5.23]{meyer}). In these cases, $\ell_\infty(E)$ can be replaced with $c_0^w(E)$ in Theorem \ref{mainth}(b) and (c).\\
(ii) Castillo, Garc\'ia and Gonzalo in \cite[Theorem 5.5]{castillo2} proved that the sum of two polynomially null sequences is not necessarily polynomially null. This is why we cannot pass to a space smaller than $c_0^w(E)$ in Theorem \ref{mainth}(a).\\
(iii) We have already explained why $c_0^w(E)$ cannot be used in general in Theorem \ref{mainth}(c). But one might wonder if we could have gone to a smaller space, formed by regular-polynomially null sequences. In order to see that we cannot, next we show that the counterxample given in \cite[Theorem 5.5]{castillo2} is good enough to show that the sum of two regular-polynomially null sequences may fail to be regular-polynomially null.
\end{remark}

\begin{theorem}\label{nespvet} The sum of two regular-polynomially null sequences in a Banach lattice is not necessarily regular-polynomially null.
\end{theorem}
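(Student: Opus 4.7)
The plan is to recycle the counterexample of Castillo, Garc\'ia and Gonzalo from \cite[Theorem 5.5]{castillo2}, which provides a Banach space $E$ together with two polynomially null sequences $(x_j)$ and $(y_j)$ in $E$ such that $(x_j+y_j)$ is not polynomially null, the failure being witnessed by some continuous scalar-valued homogeneous polynomial $P$ on $E$. The task is to upgrade this data to the Banach lattice setting: $E$ must be (realized as) a Banach lattice, and the witnessing polynomial $P$ must be regular.

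First I would verify that $E$ in their construction is naturally a Banach lattice under the coordinatewise order induced by the unconditional basis used in the construction, and that the sequences $(x_j)$ and $(y_j)$ sit inside this lattice. Second, I would observe that every regular homogeneous polynomial on a Banach lattice is, a fortiori, a continuous homogeneous polynomial; therefore every polynomially null sequence is automatically regular-polynomially null. This immediately gives, for free, the hypothesis that $(x_j)$ and $(y_j)$ are regular-polynomially null in $E$.

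The core of the argument is to produce a regular polynomial $P$ on $E$ such that $P(x_j+y_j) \not\to 0$. The polynomial used by Castillo-Garc\'ia-Gonzalo is built from the biorthogonal coefficient functionals of the basis; since these functionals are lattice homomorphisms, any finite product of them defines a positive (and therefore regular) continuous homogeneous polynomial, and finite linear combinations of such products are regular polynomials by definition. Hence $P$ is already regular; alternatively, writing $P = P^+ - P^-$ as a difference of positive polynomials in the natural way, the divergence $P(x_j+y_j)\not\to 0$ forces at least one of $P^\pm(x_j+y_j)$ to fail to converge to zero along a subsequence, yielding a positive (hence regular) witness.

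I expect the main obstacle to lie precisely in this last verification: reading off the explicit form of the polynomial used in \cite[Theorem 5.5]{castillo2} and checking that it (or an extracted subpolynomial) is regular. Once this bookkeeping is done, the conclusion is immediate, since then $(x_j+y_j)$ is not regular-polynomially null even though both summands are, and the statement of the theorem follows.
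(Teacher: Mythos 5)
Your overall strategy coincides with the paper's: recycle the Castillo--Garc\'ia--Gonzalo example from \cite[Theorem 5.5]{castillo2}, observe that regular polynomials are in particular continuous (so the two summand sequences, being polynomially null, are automatically regular-polynomially null), and then show that the polynomial witnessing the failure for the sum is regular. The paper implements exactly this, working in the Banach lattice $d_*(w;1)\times d(w;1)$ with the orders induced by the $1$-unconditional bases $(e_j^*)_{j=1}^\infty$ and $(e_j)_{j=1}^\infty$, and with the sequences $((e_j^*,0))_{j=1}^\infty$ and $((0,e_j))_{j=1}^\infty$.

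The problem is that the decisive step is left undone in your proposal, and both routes you sketch for it are shaky. First, your description of the witness is inaccurate: it is not a finite linear combination of finite products of the biorthogonal functionals, but (essentially) the duality pairing between $d_*(w;1)$ and $d(w;1)$, namely the $2$-homogeneous polynomial $\widehat{A}$ associated with the symmetric bilinear form $A((x^*,x),(y^*,y))=\frac{1}{2}\left(x(y^*)+y(x^*)\right)$. Its regularity does not follow from the structural claim you make about products of coordinate functionals; it is obtained by checking directly that $A$ is positive for the coordinatewise order, so $\widehat{A}$ is positive, hence regular, and $\widehat{A}((e_j^*,e_j))=1$ for every $j$. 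This check is easy, but it is the actual content of the proof and cannot be replaced by the bookkeeping you defer. Second, your fallback argument, writing $P=P^+-P^-$ ``in the natural way,'' is invalid in principle: a continuous homogeneous polynomial on a Banach lattice need not be a difference of positive polynomials --- that is precisely the definition of regularity, and the distinction between continuous and regular polynomials is the whole point of the statement. So the fallback begs the question; without the explicit positivity verification of the duality form, your argument is incomplete.
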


\begin{proof}\rm Let $d(w;1)$ be the Lorentz space of \cite[Theorem 5.4]{castillo2} and denote by $d_*(w;1)$ its predual. The sequence of canonical unit vectors $(e_j)_{j=1}^\infty$ is a  1-unconditional basis for $d(w;1)$ (see \cite{albiac1}) and the sequence of coordinate functionals $(e^*_j)_{j=1}^\infty$ is an unconditional basis for  $d_*(w;1)$ (see \cite{galego}), hence it is a 1-unconditional basis (see \cite[I,\,p.\,19]{lindenstrauss}). We consider $d_*(w;1)$ as a Banach lattice with the order given by its 1-unconditional basis and $d(w;1)$ with its dual structure (which coincides, by the way, with the order given by the 1-unconditional basis $(e_j)_{j=1}^\infty$). Thus, $d_*(w;1)\times d(w;1)$ is a Banach lattice with the coordinatewise order, in which we can consider, without loss of generality, the norm $\|\cdot\|_1$. According to \cite[Theorem 5.5]{castillo2}, the sequences  $((e^*_j,0))_{j=1}^\infty$ and $((0,e_j))_{j=1}^\infty$ are polynomially null, hence regular-polynomially null, in $d_*(w;1)\times d(w;1)$. Let us see that their sum  $((e^*_j,e_j))_{j=1}^\infty$ is not regular-polynomially null. To do so, consider the symmetric bilinear form $A$ on $(d_*(w;1)\times d(w;1))\times (d_*(w;1)\times d(w;1))$ given by 
\begin{align*}
     A ((x^*,x),(y^*,y))= 1/2(x(y^*)+(y(x^*))
\end{align*}
(see \cite[Example 1.16]{dineen}). It is easy to check that $A$ is positive, from which it follows that  
its associated 2-homogeneous polynomial $\widehat{A}$ is positive, hence regular. Since $\widehat{A}((e^*_j,e_j))=1$ for every $j\in\mathbb{N}$, we conclude that $((e^*_j,e_j))_{j=1}^\infty$ is not regular-polynomially null.
\end{proof}
%
%
%
Now it is easy to see that, for every Banach space $E$, the set $PN$ of polynomially null $E$-valued sequences is spaceable in $c_0^w(E)$: if $E$ is not polynomially Schur, in Theorem \ref{mainth}(a) we proved that a set much smaller than $PN$ is spaceable; if $E$ is polynomially Schur, it is easy to check that $PN= c_0(E)$, the closed subspace of $c_0^w(E)$ formed by norm null sequences.

Theorem \ref{nespvet} rises the question of the complete latticeability of the set of disjoint regular-polynomially null sequences in a Banach lattice $E$. In Theorem \ref{mainth}(c) we proved that a set much smaller than this is completely latticeable whenever the Banach lattice $E$ is not positively polynomially Schur. We finish the section by giving a short proof that a set smaller than the set of disjoint regular-polynomially null sequences is completely latticeable in general. Recall that $c_0(E)$ is a closed sublattice of $\ell_\infty(E)$, hence a Banach lattice itself.

\begin{proposition} For every infinite dimensional Banach lattice $E$, the set of $E$-valued disjoint norm null sequences is completely latticeable in $c_0(E)$.
\end{proposition}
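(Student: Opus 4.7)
The plan is direct: construct an isometric Riesz embedding $T\colon c_0\to c_0(E)$ whose image is the desired sublattice.

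First, I would invoke the classical fact that every infinite dimensional Banach lattice admits an infinite sequence of pairwise disjoint nonzero positive vectors, and pick such a sequence $(x_n)_{n=1}^\infty$ in $E$ normalized so that $\|x_n\|=1$ for every $n$.

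Second, I would define
$$T\colon c_0\longrightarrow c_0(E),\quad T\bigl((a_n)_{n=1}^\infty\bigr)=(a_nx_n)_{n=1}^\infty.$$
Since $\|a_nx_n\|=|a_n|\longrightarrow 0$, $T$ takes values in $c_0(E)$, and $\sup_n\|a_nx_n\|=\sup_n|a_n|$ shows that $T$ is a linear isometry. Its range is therefore a closed infinite dimensional subspace of $c_0(E)$.

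Third, I would verify that $T$ is a Riesz homomorphism. Lattice operations in $c_0$ and in $c_0(E)\subseteq\ell_\infty(E)$ are coordinatewise, so this reduces to the elementary identity $(ax)\land(bx)=(a\land b)x$ valid for any scalars $a,b$ and any $x\geq 0$ (immediate from $(ax)^+=a^+x$ and $(ax)^-=a^-x$). Thus $T(c_0)$ is a closed infinite dimensional sublattice of $c_0(E)$.

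Finally, I would observe that every nonzero $(a_nx_n)_{n=1}^\infty\in T(c_0)$ is norm null by the estimate above and is a disjoint sequence in $E$ by \cite[Lemma 1.9(1)]{aliprantis1} (already used in the proof of Theorem \ref{mainth}(b)), since $(x_n)$ itself is disjoint. There is no serious obstacle; the only mildly nontrivial ingredient is the existence of the disjoint positive sequence, which is a standard fact about infinite dimensional Banach lattices.
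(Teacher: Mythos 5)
Your proposal is correct and follows essentially the same route as the paper: the paper also starts from a nonzero positive disjoint sequence (cited as \cite[Lemma 2.6]{oikhberg}), defines $T((a_j)_{j=1}^\infty)=\bigl(\frac{a_j}{\|x_j\|}x_j\bigr)_{j=1}^\infty$ (your normalization in advance is the same thing), checks that $T$ is an isometric Riesz embedding using positivity of the $x_j$, and gets disjointness of the range from \cite[Lemma 1.9(1)]{aliprantis1}. No differences worth noting.
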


\begin{proof} By \cite[Lemma 2.6]{oikhberg} there is a positive disjoint sequence $(x_j)_{j=1}^\infty$ in $E$ such that $x_j \neq 0$ for every $j$. It is plain that
$$T \colon c_0 \longrightarrow c_0(E)~,~T\left((a_j)_{j=1}^\infty\right) = \left( \frac{a_j}{\|x_j\|}x_j\right)_{j=1}^\infty,$$
is a well defined linear isometric embedding. Since $x_j \geq 0$ for every $j$, the same reasoning of the proof of Theorem \ref{mainth}(b) shows that $T$ is a Riesz homomorphism, therefore $T(c_0)$ is an infinite dimensional closed sublattice of $c_0(E)$. The disjointness of the elements of the range of $T$ follows from the disjointness of the sequence $(x_j)_{j=1}^\infty$ combined with \cite[Lemma 1.9(1)]{aliprantis1}.
\end{proof}

\section{Complete latticeability of $\lambda_\pi(E) \setminus \lambda_s(E)$}\label{section3}

Given a Banach lattice $E$ and a space of scalar-valued sequences $\lambda$, the Banach lattice $\lambda_\pi(E)$ and its subspace $\lambda_s(E)$ were introduced in \cite{bu2} and have been studied in, e.g., \cite{botelho1,bu1,bu2,bu5,bu3}. The purpose of this section is to prove that $\lambda_\pi(E)\setminus\lambda_s(E)$ is either empty or completely latticeable in $\lambda_\pi(E)$ and to derive some consequences of this fact, especially when $\lambda$ is $\ell_p$ or an Orlicz sequence space. Let us recall the definitions of these spaces (see \cite{bu2}).

By a {\it sequence space} we mean a linear subspace $\lambda$ of $\mathbb{R}^\mathbb{N}$. The {\it K\"othe dual} of $\lambda$, which is also a sequence space, is defined by
$$
\lambda'=\left\{(b_j)_{j=1}^\infty\in\mathbb{R}^\mathbb{N}: \sum_{j=1}^\infty|a_jb_j|<\infty {\rm ~for~every~}  (a_j)_{j=1}^\infty\in\lambda\right\}.
$$


Suppose that $\lambda$ is an order continuous Banach lattice. Then $\lambda'=\lambda^*$ (\cite[p.\,339]{bu2} or \cite[II,\,p.\,29]{lindenstrauss})  and $\lambda'$ is a Banach lattice with the norm
$$
\|(b_j)_{j=1}^\infty\|_{\lambda'}=\|(b_j)_{j=1}^\infty\|_{\lambda^*}=\sup_{(a_j)_{j=1}^\infty\in B_\lambda}\left|\sum_{j=1}^\infty a_jb_j\right|.
$$

We are interested in sequence spaces enjoying some special properties, which, as we shall see, include the most important examples.

\begin{definition}\label{deffed}\rm (a) For a sequence $x=(x_j)_{j=1}^\infty$ in a linear space, we define $x^0:=(x_j^0)_{j=1}^\infty$, where $x^0_j$ is the $j$-th nonzero coordinate of $x$ if such coordinate exists and zero otherwise. \\
(b) A sequence space $\lambda$ is an {\it invariant sequence lattice} if it is a  KB-space with $\|e_j\|_{\lambda}=1$ for every $j\in\mathbb{N}$ such that $\lambda'$ enjoys the following conditions:

(i) $x\in\lambda'\Leftrightarrow x^0\in\lambda'$ and, in this case, $\|x\|_{\lambda'}=\|x^0\|_{\lambda'}$;

(ii) if $x\in\lambda'$ then every subsequence $z$ of $x$ belongs to $\lambda'$ and $\|z\|_{\lambda'}\leq\|x\|_{\lambda'}$.
\end{definition}

Note that the definiton of $x^0$ is slightly different from the one given in \cite{geraldo} and coincides with the vector $x'$ ({\it closing up $x$}) given in  \cite{carando}.

It is not difficult to check that if $\lambda$ is a Banach lattice such that $\|e_j\|_\lambda=1$ for every $j\in\mathbb{N}$, then $\|e_j\|_{\lambda'}=1$ for every $j\in\mathbb{N}$.

\begin{examples}\label{exampless}\rm  (a) If  $\lambda$ is such that $\lambda'$ is a symmetric sequence space in the sense of \cite{carando}, then $\lambda'$ enjoys the conditions (i) and (ii) above \cite[Proposition 2.2]{carando}.\\
(b) For $1\leq p<\infty$, $\ell_p$ is  a KB-space. It is clear that $\|e_j\|_p = 1$ and that $\ell'_p=\ell_{p'}$ (where $1/p+1/p'=1$) is symmetric, hence $\ell_p$ is an invariant sequence lattice for $1\leq p<\infty$.\\
(c) Let $\varphi \colon [0,\infty) \longrightarrow [0, \infty)$ be an Orlicz function satisfying the $\Delta_2$-condition of \cite[I, Definition 4.a.3]{lindenstrauss} and admitting a complementary function $\varphi^*$ according to \cite[I, p.\,147]{lindenstrauss},  and let $\ell_\varphi$ be the corresponding Orlicz sequence space, which happens to be a Banach lattice. The K\"othe dual of $\ell_\varphi$ is $\ell_{\varphi^*}$ \cite[Corollary 8.28]{gupta} and it is plain that $\ell_{\varphi^*}$ is symmetric. Since $\ell_\varphi$ is a KB-space (see the proof of \cite[I, Theorem 4.a.9]{lindenstrauss}) and $\|e_j\|_{\ell_\varphi} = 1$, then $\ell_\varphi$ is an invariant sequence lattice.
\end{examples}

Now we present, according to \cite{bu2}, the definitions of the vector-valued sequence spaces we shall work with.

\begin{definition}\rm For a Banach lattice $E$ and a KB-space $\lambda$ such that  $\|e_j\|_\lambda=1$ for every  $j\in\mathbb{N}$,
$$
\lambda'_w(E^*):=\left\{(x^*_j)_{j=1}^\infty\in (E^*)^\mathbb{N}:  (x^*_j(x))_{j=1}^\infty\in\lambda' {\rm ~for~every~}x\in E\right\}
$$
and
$$
\lambda_s(E):=\left\{(x_j)_{j=1}^\infty\in E^\mathbb{N}:  \sum_{j=1}^\infty|x^*_j(x_j)|<\infty  {\rm ~for~every~} (x^*_j)_{j=1}^\infty\in \lambda'_w(E^*) \right\}$$
are Banach spaces (not necessarily Banach lattices) with the norms
$$
\|(x^*_j)_{j=1}^\infty\|_w:=\sup_{x\in B_E}\|(x^*_j(x))_{j=1}^\infty\|_{\lambda'}~,~
\|(x_j)_{j=1}^\infty\|_s:=\sup_{(x^*_j)_{j=1}^\infty\in B_{\lambda'_w(E^*)}}\left|\sum_{j=1}^\infty x^*_j(x_j)\right|,
$$
respectively \cite[p.\,339]{bu2}. Also,
$$
\lambda'_\varepsilon(E^*):=\left\{(x^*_j)_{j=1}^\infty\in (E^*)^\mathbb{N}: (|x^*_j|(x))_{j=1}^\infty\in\lambda' {\rm ~for~every~} x\in E^+\right\}
$$
and
$$
\lambda_\pi(E):=\left\{(x_j)_{j=1}^\infty\in E^\mathbb{N} : \sum_{j=1}^\infty x^*_j(|x_j|)<\infty {\rm ~for~every~} (x^*_j)_{j=1}^\infty\in \lambda'_\varepsilon(E^*)^+ \right\}
$$
are Banach lattices with the norms
$$
\|(x^*_j)_{j=1}^\infty\|_\varepsilon:=\sup_{x\in B_{E^+}}\|(|x^*_j|(x))_{j=1}^\infty\|_{\lambda'}~,~
\|(x_j)_{j=1}^\infty\|_\pi:=\sup_{(x^*_j)_{j=1}^\infty\in B_{\lambda'_\varepsilon(E^*)^+}}\sum_{j=1}^\infty x^*_j(|x_j|),
$$
respectively \cite[p.\,344]{bu2}.
\end{definition}

In the examples of the theory developed so far about these spaces (see \cite{botelho1,bu1,bu2,bu5,bu3}, the underlying sequence space $\lambda$ is usually one of the invariant sequence lattices listed in Examples \ref{exampless}.

For $\lambda = \ell_p$, $1 < p < \infty$, $\lambda_s(E)$ coincides isometrically with the space $\ell_p\langle E\rangle$ of Cohen strongly $p$-summable sequences (see \cite[p.\,520]{bu4} and \cite[p.\,339]{bu2}), which is isometrically isomorphic to the completed projective tensor product $\ell_p\widehat{\otimes}_{\pi}E$ \cite[p.\,525]{bu4}. And, in this case, $\lambda_\pi(E)$ is lattice isometric to the positive projective tensor product $\ell_p\widehat{\otimes}_{|\pi|}E$ \cite[Theorem 15]{bu1}. For $\lambda = \ell_p$, $1 < p < \infty$, we shall henceforth write $\ell_p\langle E\rangle$ instead of $\lambda_s(E)$ and $\ell_p^\pi(E)$ instead of $\lambda_\pi(E)$.

According to \cite[Proposition 5.2]{bu2}, for every KB-space $\lambda$ such that $\|e_j\|_\lambda=1$ and every Banach lattice $E$, $\lambda_s(E)$ is a linear subspace of $\lambda_\pi(E)$ and the inclusion is a norm one operator. This rises the question of whether or not $\lambda_\pi(E)\setminus\lambda_s(E)$ is (completely) latticeable. The purpose of this section is to solve this question affirmatively.

Before solving the problem, let us justify why the (few) known general criteria for complete latticeability, due to Oikhberg, do not apply to our situation, even in the canonical case $\lambda = \ell_p$, $1 < p < \infty$. We need the following lemma.

\begin{lemma}\label{lemmaidfech} For $1< p,q < \infty$, $\ell_p\langle \ell_q\rangle$ is not a closed ideal of $\ell^\pi_p(\ell_q)$.
\end{lemma}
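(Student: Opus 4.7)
The plan is to prove the stronger assertion that $\ell_p\langle \ell_q\rangle$ is not even $\|\cdot\|_\pi$-closed in $\ell_p^\pi(\ell_q)$, which a fortiori rules out being a closed ideal. The argument runs through the open mapping theorem applied to the two natural Banach-space norms on $\ell_p\langle \ell_q\rangle$.

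First I would recall that $(\ell_p\langle \ell_q\rangle, \|\cdot\|_s)$ is a Banach space and that \cite[Proposition 5.2]{bu2} supplies the continuous inclusion $\|\cdot\|_\pi \leq \|\cdot\|_s$ on $\ell_p\langle \ell_q\rangle$. Assume, toward a contradiction, that $\ell_p\langle \ell_q\rangle$ is $\|\cdot\|_\pi$-closed in $\ell_p^\pi(\ell_q)$. Then the restriction of $\|\cdot\|_\pi$ to $\ell_p\langle \ell_q\rangle$ is also complete, so the identity becomes a continuous linear bijection between two Banach-space norms on the same vector space; by the open mapping theorem, $\|\cdot\|_s$ and $\|\cdot\|_\pi$ must be equivalent on $\ell_p\langle \ell_q\rangle$.

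The desired contradiction then comes from the isometric identifications $\ell_p\langle \ell_q\rangle \cong \ell_p\widehat{\otimes}_\pi \ell_q$ and $\ell_p^\pi(\ell_q) \cong \ell_p\widehat{\otimes}_{|\pi|}\ell_q$ recalled earlier in the section. Under these identifications, the restrictions of $\|\cdot\|_s$ and $\|\cdot\|_\pi$ to the algebraic tensor product $\ell_p \otimes \ell_q \subset c_{00}(\ell_q) \subset \ell_p\langle \ell_q\rangle$ are, respectively, the projective and positive projective tensor norms on $\ell_p \otimes \ell_q$. For $1 < p, q < \infty$ these two tensor norms are not equivalent on $\ell_p \otimes \ell_q$ (equivalently, $\ell_p\widehat{\otimes}_\pi \ell_q \subsetneq \ell_p\widehat{\otimes}_{|\pi|}\ell_q$, which is the whole point of the section), so they cannot be equivalent on the ambient space $\ell_p\langle \ell_q\rangle$ either, contradicting the previous paragraph. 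A concrete witness is the sign-patterned family $z^{(n)} = \sum_{j,k=1}^n \varepsilon_{jk}\, e_j \otimes e_k$ with $(\varepsilon_{jk})$ a Hadamard or Rademacher pattern: since $\|\cdot\|_\pi$ depends only on the entrywise moduli, $\|z^{(n)}\|_\pi = \|\mathbf{1}_n \otimes \mathbf{1}_n\|_\pi = n^{1/p+1/q}$, whereas $\|z^{(n)}\|_s$ is sensitive to the sign pattern and grows strictly faster (for instance, the Hadamard choice with $p = q = 2$ yields $\|z^{(n)}\|_s \asymp n^{3/2}$, a genuine $\sqrt{n}$-gap).

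The main obstacle I anticipate is this last quantitative step: if the non-equivalence of the projective and positive projective tensor norms on $\ell_p \otimes \ell_q$ is not directly citable from \cite{bu1,bu2,bu5,bu3}, one has to carry out the tensor-norm computations by hand for an appropriate sign-patterned finite block, using Hadamard matrices when $p = q = 2$ and Rademacher averaging together with Khintchine's inequality for general $1 < p, q < \infty$ in order to read off the differing polynomial rates in $n$.
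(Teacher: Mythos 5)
Your opening move coincides with the paper's: assume the relevant closedness, recall from \cite[Proposition 5.2]{bu2} that $\|\cdot\|_\pi\leq\|\cdot\|_s$ on $\ell_p\langle\ell_q\rangle$, and apply the Open Mapping Theorem to force the two norms to be equivalent there. The divergence, and the gap, is in how you reach a contradiction. You need the non-equivalence of the projective and positive projective norms on $\ell_p\otimes\ell_q$ for \emph{all} $1<p,q<\infty$, but you only verify it for $p=q=2$ (that computation is correct: the Hadamard block has trace norm $n^{3/2}$ in $\ell_2\widehat{\otimes}_\pi\ell_2\cong S_1$, while the $|\pi|$-norm, depending only on the entrywise moduli, is $n$). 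For general $p,q$ you offer only the phrase ``Rademacher averaging together with Khintchine's inequality''; this is precisely the mathematical content of the lemma, it is not a routine estimate (to lower bound the projective norm by duality one must beat the trivial bound $n^{1/p'+1/q'}$ for the norm of the sign-pattern bilinear form on $\ell_p\times\ell_q$, which requires an interpolation-type argument, delicate when $p$ or $q$ is near $1$), and you yourself flag it as the unresolved obstacle. So, as written, the proof is incomplete outside $p=q=2$. A minor slip besides: under the identification $S$, the inclusion $\ell_p\otimes\ell_q\subset c_{00}(\ell_q)$ is backwards (elementary tensors $t\otimes x$ with $t$ of infinite support are not finitely supported sequences); this is harmless since your witnesses are finitely supported.

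There are two clean ways to close the gap. First, you can avoid the hand computation entirely: if $\|\cdot\|_s$ and $\|\cdot\|_\pi$ were equivalent on $\ell_p\langle\ell_q\rangle\supseteq\ell_p\otimes\ell_q$, the identity on $\ell_p\otimes\ell_q$ would extend to an isomorphism of the completions, so $\ell_p\widehat{\otimes}_\pi\ell_q$ and $\ell_p\widehat{\otimes}_{|\pi|}\ell_q$ would be isomorphic Banach spaces, and \cite[Proposition 42]{bu5} (the ingredient the paper uses later in Corollary \ref{fcorf}) would force $\ell_p$ or $\ell_q$ to be isomorphic to an AL-space, which is false for $1<p,q<\infty$; this creates no circularity, since the lemma is not used in the proof of that corollary. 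Second, the paper's own route is different after the OMT step: the closed-ideal hypothesis makes $(\ell_p\langle\ell_q\rangle,\|\cdot\|_\pi)$ a Banach lattice, so $\ell_p\widehat{\otimes}_\pi\ell_q$ is isomorphic to a Banach lattice, hence (passing to biduals, \cite[p.\,59]{johnson} or \cite[Theorem 17.5]{djt}) it has Gordon--Lewis local unconditional structure, contradicting \cite[Corollary 3.6]{gordon}. Note that the paper genuinely uses the lattice structure of the ideal at this point, whereas your assertion that $\ell_p\langle\ell_q\rangle$ is not even $\|\cdot\|_\pi$-closed is a real strengthening --- but it only stands once the norm-gap step is actually supplied.
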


\begin{proof} Suppose that $\ell_p\langle \ell_q\rangle$ is a closed ideal of $\ell^\pi_p(\ell_q)$. In this case $(\ell_p\langle \ell_q\rangle,\|\cdot\|_\pi)$ is a Banach lattice and, since $\|\cdot\|_\pi\leq\|\cdot\|_s$,  the identity operator $(\ell_p\langle \ell_q\rangle,\|\cdot\|_s)\longrightarrow (\ell_p\langle \ell_q\rangle,\|\cdot\|_\pi)$ is a continuous bijection. By the Open Mapping Theorem this identity is an isomorphism. As we have already mentioned, $(\ell_p\langle \ell_q\rangle,\|\cdot\|_s)$ is isomorphic to $\ell_p\widehat{\otimes}_\pi \ell_q$, hence the latter space is also isomorphic to the Banach lattice $(\ell_p\langle \ell_q\rangle,\|\cdot\|_\pi)$. Therefore, $(\ell_p\widehat{\otimes}_\pi \ell_q)^{**}$  is isomorphic to the Banach lattice $(\ell_p\langle \ell_q\rangle,\|\cdot\|_\pi)^{**}$. From \cite[p.\,59]{johnson} (or \cite[Theorem 17.5]{djt}) it follows that $\ell_p\widehat{\otimes}_\pi \ell_q$ has Gordon-Lewis local unconditional structure. But  $\ell_p\widehat{\otimes}_\pi \ell_q$ does not have Gordon-Lewis local unconditional structure by \cite[Corollary 3.6]{gordon}.
\end{proof}

Now we return to the general criteria for complete latticeability.

\medskip

\noindent $\bullet$ \cite[Corollary 1.4]{oikhberg2} does not apply to $\lambda_\pi(E)\setminus\lambda_s(E)$ because $\lambda_s(E)$ is not the range of a compact operator on $\lambda_\pi(E)$. Otherwise  $\lambda_s(E)$ would be separable \cite[Proposition 3.4.7]{megginson}, and this is not the case for a nonseparable $E$ because  $\lambda_s(E)$ contains an isometric copy of $E$.\\ 
$\bullet$ \cite[Proposition 2.9]{oikhberg} does not fit because the order in $\lambda_\pi(E)$ is not given by an 1-unconditional basis in general. Actually, $\lambda_\pi(E)$ may be nonseparable since it contains a copy of $E$.\\
$\bullet$ \cite[Theorem 1.1]{oikhberg2} cannot be applied because $\lambda_\pi(E)$ is not always order continuous. In fact, taking $E = \ell_\infty$, which is  $\sigma$-Dedekind complete, and $\lambda = \ell_p$, $1 < p < \infty$, $\lambda_\pi(E)$ is $\sigma$-Dedekind complete as well \cite[Theorem 5.5]{bu2} and, as $\lambda_\pi(E)$ contains a copy of $E$, it follows from \cite[Theorem 14.9]{aliprantis2}  that $\lambda_\pi(E)$ fails to be order continuous.\\
$\bullet$  Finally, \cite[Proposition 2.4]{oikhberg} does not apply either because, due to Lemma \ref{lemmaidfech}, $\lambda_s(E)$ is not a closed ideal of $\lambda_\pi(E)$ in general.

Once the general criteria do not apply, an {\it ad hoc} argument is needed to prove the complete latticeability of $\lambda_\pi(E)\setminus\lambda_s(E)$. Our proof is based on the mother vector technique developed in \cite{geraldo}.

The notation $x=\sum\limits_{j=1}^\infty x_j\cdot e_j$ shall henceforth mean that $x$ is a sequence whose $j$-th coordinate is $x_j$, $j \in \mathbb{N}$.
\begin{lemma}\label{lemal} Let $\lambda$ be a KB-space such that $\|e_j\|_\lambda=1$ and let $E$ be a Banach lattice. Then $\lambda_\pi(E) \subseteq \ell_\infty(E)$ and $\|\cdot\|_\infty \leq \|\cdot\|_\pi$.
\end{lemma}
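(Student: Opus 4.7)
The plan is to show, for each fixed coordinate $k$, that $\|x_k\|_E \leq \|x\|_\pi$ by testing the definition of $\|x\|_\pi$ against a single functional concentrated at position $k$. Taking the supremum over $k$ then yields both the inclusion $\lambda_\pi(E) \subseteq \ell_\infty(E)$ and the norm inequality.

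More concretely, fix $x = (x_j)_{j=1}^\infty \in \lambda_\pi(E)$ and $k \in \mathbb{N}$. Since $|x_k| \geq 0$, I would use Hahn--Banach in the Banach-lattice setting (extend a norming functional for $|x_k|$ on the sublattice $\mathbb{R} \cdot |x_k|$ and then take its positive part, using that the norm is monotone on positive elements) to produce a positive $x_k^* \in E^*$ with $\|x_k^*\| = 1$ and $x_k^*(|x_k|) = \|x_k\|$. Define the ``point-mass'' sequence $y^* \in (E^*)^{\mathbb{N}}$ by $y_k^* = x_k^*$ and $y_j^* = 0$ for $j \neq k$, so that $y^*$ is positive and each $|y_j^*| = y_j^*$.

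Next I would verify that $y^* \in B_{\lambda_\varepsilon'(E^*)^+}$. For any $x \in E^+$, the sequence $(|y_j^*|(x))_{j=1}^\infty = x_k^*(x) \cdot e_k$ is supported on the single index $k$, hence lies in $\lambda'$ with norm $|x_k^*(x)| \cdot \|e_k\|_{\lambda'} = |x_k^*(x)|$, where I use $\|e_k\|_{\lambda'} = 1$ (noted in the excerpt just after Definition~\ref{deffed}). Therefore
\[
\|y^*\|_\varepsilon \;=\; \sup_{x \in B_{E^+}} |x_k^*(x)| \;=\; \|x_k^*\| \;=\; 1,
\]
the middle equality holding because a positive functional on a Banach lattice attains its norm on $B_{E^+}$. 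With $y^*$ thus in the unit ball of $\lambda_\varepsilon'(E^*)^+$, the definition of $\|\cdot\|_\pi$ gives
\[
\|x\|_\pi \;\geq\; \sum_{j=1}^\infty y_j^*(|x_j|) \;=\; x_k^*(|x_k|) \;=\; \|x_k\|_E.
\]
Taking the supremum over $k \in \mathbb{N}$ shows $(x_j)_{j=1}^\infty \in \ell_\infty(E)$ and $\|x\|_\infty \leq \|x\|_\pi$, as required.

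There is no real obstacle here; the only mild subtlety is pinning down that $\|e_k\|_{\lambda'} = 1$ (supplied by the excerpt) and that the norming functional for $|x_k|$ can be chosen positive with the same norm, which is the standard Hahn--Banach-plus-lattice-decomposition argument.
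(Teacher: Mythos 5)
Your proof is correct and follows essentially the same route as the paper: test $\|\cdot\|_\pi$ against a point-mass sequence supported at one coordinate, built from a Hahn--Banach norming functional made positive (the paper simply plugs $|y^*|\cdot e_i$ into the $\varepsilon$-ball and uses $y^*(|y_i|)\leq |y^*|(|y_i|)$, while you pass to a positive norming functional first -- an inessential variation). The verification that the point-mass sequence lies in $B_{\lambda'_\varepsilon(E^*)^+}$, via $\|e_k\|_{\lambda'}=1$ and the fact that positive functionals compute their norm over $B_{E^+}$, matches the paper's computation.
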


\begin{proof} Given $(y_j)_{j=1}^\infty\in\lambda_\pi(E)$, let $i\in\mathbb{N}$ be such that $y_i\neq 0$. By the Hahn-Banach Theorem there is $y^*\in E^*$ such that $\|y^*\|_{E^*}=1$ and $y^*(|y_i|)=\||y_i|\|_E=\|y_i\|_E$. The sequence $(\psi_j)_{j=1}^\infty:=|y^*|\cdot e_i$ belongs to $B_{\lambda'_\varepsilon(E^*)^+}$ because
\begin{align*}
\|(\psi_j)_{j=1}^\infty\|_\varepsilon&=\sup_{x\in B_{E^+}}\|(|\psi_j|(x))_{j=1}^\infty\|_{\lambda'}=\sup_{x\in B_{E^+}}\|(|y^*|(x))e_i\|_{\lambda'}\\&=\sup_{x\in B_{E^+}}(|y^*|(x))\|e_i\|_{\lambda'}= \||y^*|\|_{E^*}=\|y^*\|_{E^*}=1.
\end{align*}
Therefore, for every $i \in \mathbb{N}$,
\begin{align*}
\|y_i\|_E&=y^*(|y_i|)=|y^*(|y_i|)|{\leq} |y^*|(|y_i|)=\sum_{j=1}^\infty\psi_j(|y_j|)\\& \leq \sup_{(y^*_j)_{j=1}^\infty\in B_{\lambda'_\varepsilon(E^*)^+}}\sum_{j=1}^\infty y^*_j(|y_j|)=\|(y_j)_{j=1}^\infty\|_\pi,
\end{align*}
from which the result follows.
\end{proof}

\begin{lemma}\label{llema} Let $\lambda$ be an invariant sequence lattice and let $E$ be a Banach lattice. Then:\\
{\rm (a)} $\lambda'_\varepsilon(E^*)$ and $\lambda'_w(E^*)$ satisfy conditions {\rm (i)} and {\rm (ii)} of Definition \ref{deffed}(b).\\
{\rm (b)} $\lambda_\pi(E)$ and $\lambda_s(E)$ satisfy condition {\rm (i)} of Definition \ref{deffed}(b).
\end{lemma}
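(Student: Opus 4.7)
The plan is to first establish (a), reducing both conditions to the corresponding properties of $\lambda'$ itself, and then use (a) in the duality expressions for $\|\cdot\|_\pi$ and $\|\cdot\|_s$ to derive (b).

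For (a), fix $(x^*_j)_{j=1}^\infty\in (E^*)^{\mathbb{N}}$ and write $(x^*_j)^0=(x^*_{j_k})_{k=1}^\infty$. The key observation is that, for every $x\in E^+$, the scalar sequence $(|x^*_{j_k}|(x))_k$ is obtained from $(|x^*_j|(x))_j$ precisely by deleting the entries at positions $j$ where $x^*_j=0$, and those entries were already zero. Consequently both scalar sequences have the same closing up:
$$
\bigl((|x^*_{j_k}|(x))_k\bigr)^0=\bigl((|x^*_j|(x))_j\bigr)^0.
$$
Applying condition (i) of Definition \ref{deffed}(b) for $\lambda'$ to this identity gives, for each fixed $x$, membership equivalence and equality of $\lambda'$-norms; taking the supremum over $x\in B_{E^+}$ yields condition (i) for $\lambda'_\varepsilon(E^*)$. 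For condition (ii), any subsequence of $(x^*_j)$ produces, pointwise in $x$, the corresponding subsequence of $(|x^*_j|(x))_j$, so condition (ii) of $\lambda'$ combined with a supremum over $x\in B_{E^+}$ finishes the job. The argument for $\lambda'_w(E^*)$ is identical after replacing $|x^*_j|(x)$ by $x^*_j(x)$ and $E^+$ by $E$.

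For (b), I use the duality expression
$$
\|(x_j)\|_\pi=\sup_{(x^*_j)\in B_{\lambda'_\varepsilon(E^*)^+}}\sum_{j=1}^\infty x^*_j(|x_j|)
$$
and write $(x_j)^0=(x_{j_k})_{k=1}^\infty$. To prove $\|(x_j)\|_\pi\le\|(x_{j_k})\|_\pi$, given $(x^*_j)\in B_{\lambda'_\varepsilon(E^*)^+}$, pass to its subsequence $(x^*_{j_k})_k$, which lies in $B_{\lambda'_\varepsilon(E^*)^+}$ by part (a)(ii); since $x_j=0$ for $j\notin\{j_k\}$, the sums $\sum_j x^*_j(|x_j|)$ and $\sum_k x^*_{j_k}(|x_{j_k}|)$ coincide. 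For the reverse inequality, given $(y^*_k)\in B_{\lambda'_\varepsilon(E^*)^+}$, define $x^*_{j_k}:=y^*_k$ and $x^*_j:=0$ otherwise; then $(x^*_j)^0=(y^*_k)^0$, and part (a)(i) gives $\|(x^*_j)\|_\varepsilon=\|(y^*_k)\|_\varepsilon\le 1$, while the dual pairings again match. This yields $\|(x_j)\|_\pi=\|(x_{j_k})\|_\pi$, and the same two constructions simultaneously produce the membership equivalence $(x_j)\in\lambda_\pi(E)\Leftrightarrow(x_{j_k})\in\lambda_\pi(E)$. The proof for $\lambda_s(E)$ follows the same pattern, replacing $\lambda'_\varepsilon(E^*)$ by $\lambda'_w(E^*)$ and $|x_j|$ by $x_j$, and placing an absolute value outside the sum.

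I do not anticipate a serious technical obstacle. The only bookkeeping matter is that in (b) the supremum ranges over the positive cone $B_{\lambda'_\varepsilon(E^*)^+}$, but both operations used---passing to a subsequence and inserting zero functionals---preserve positivity, so no extra adjustment is needed.
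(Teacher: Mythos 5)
Your proof is correct and follows essentially the same route as the paper: part (a) is reduced to conditions (i) and (ii) for $\lambda'$ via the observation that $\bigl((|x^*_j|(x))_j\bigr)^0=\bigl((|(x^*_j)^0|(x))_j\bigr)^0$, and part (b) uses exactly the paper's two test-sequence constructions, namely the subsequence $(x^*_{j_k})_k$ and the ``zero-insertion'' sequence placing the given functionals at the nonzero positions of $x$, together with part (a) to control membership and the $\varepsilon$-norm. The only cosmetic difference is that the paper writes these constructions as $z^*=\sum_j x^*_{n_j}\cdot e_j$ and $y^*=\sum_j x^*_j\cdot e_{n_j}$, which is the same thing.
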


\begin{proof} (a) Let $x^* = (x_j^*)_{j=1}^\infty$. On the one hand, from the definitions and the properties of $\lambda$,
 $$x^* \in \lambda_\varepsilon'(E^*) \Leftrightarrow \left(|x_j^*|(x) \right)_{j=1}^\infty \in \lambda', \forall x \in E^+ \Leftrightarrow \left(\left(|x_j^*|(x) \right)_{j=1}^\infty\right)^0 \in \lambda', \forall x \in E^+ {\rm ~and~}
$$
$$\|x^*\|_{\varepsilon}  = \sup_{ x \in B_{E^+}}\left\|\left(|x_j^*|(x) \right)_{j=1}^\infty\right\|_{\lambda'}  = \sup_{x \in B_{E^+}}\left\|\left(\left(|x_j^*|(x) \right)_{j=1}^\infty\right)^0\right\|_{\lambda'}.$$
On the other hand, for the same reasons,
$$
(x^*)^0 \in \lambda_\varepsilon'(E^*) \Leftrightarrow \left(|(x_j^*)^0|(x) \right)_{j=1}^\infty \in \lambda', \forall x \in E^+ \Leftrightarrow \left(\left(|(x_j^*)^0|(x)\right)_{j=1}^\infty\right)^0 \in \lambda', \forall x \in E^+ {~\rm and}
$$
\begin{align*}
\|(x^*)^0\|_{\varepsilon}  = \sup_{x \in B_{E^+}}\left\|\left(|(x_j^*)^0|(x) \right)_{j=1}^\infty\right\|_{\lambda'}
 = \sup_{x \in B_{E^+}}\left\|\left(\left(|(x_j^*)^0|(x) \right)_{j=1}^\infty\right)^0\right\|_{\lambda'}.
\end{align*}
A moment's reflection yields that
\begin{equation*}\label{eq.zerofree}
\left(\left(|x_j^*|(x) \right)_{j=1}^\infty\right)^0=\left(\left(|(x_j^*)^0|(x) \right)_{j=1}^\infty\right)^0.
\end{equation*}
This shows that $x^*\in\lambda'_\varepsilon(E^*)\Leftrightarrow (x^*)^0\in\lambda'_\varepsilon(E^*)$ and  $\|x^*\|_\varepsilon=\|(x^*)^0\|_\varepsilon$, proving that $\lambda'_\varepsilon(E^*)$ satisfies condition {\rm (i)} of Definition \ref{deffed}(b). The case of $\lambda'_w(E^*)$ is similar.

Given $x^*\in\lambda'_\varepsilon(E^*)$, the fact that $\lambda$ is an invariant sequence lattice implies immediately that every subsequence $z^*$ of $x^*$ belongs to  $\lambda'_\varepsilon(E^*)$. Moreover, $\|(|z^*_j|(x))_{j=1}^\infty\|_{\lambda'}\leq \|(|x^*_j|(x))_{j=1}^\infty\|_{\lambda'}$ for $x\in B_{E^+}$, so the definition of  $\|\cdot\|_\varepsilon$ gives $\|z^*\|_\varepsilon\leq\|x^*\|_\varepsilon$, showing that $\lambda'_\varepsilon(E^*)$ satisfies condition {\rm (ii)} of Definition \ref{deffed}(b). Again, the case of $\lambda'_w(E^*)$ is similar.

\medskip

\noindent (b) Let $x=(x_j)_{j=1}^\infty\in\lambda_\pi(E)$ be given and let $x^0=(x_{n_j})_{j=1}^\infty$, that is, $x_{n_j}$ is the $j$-th nonzero coordinate of  $x$ if such coordinate exists and zero otherwise.  Given  $x^*=(x^*_j)_{j=1}^\infty\in\lambda'_\varepsilon(E^*)^+$, considere the sequence  
$y^*=\sum\limits_{j=1}^\infty x^*_j\cdot e_{n_j}$. Since $(y^*)^0=(x^*)^0$ and $x^*\in\lambda'_\varepsilon(E^*)^+$, by (a) we have  $y^*\in\lambda'_\varepsilon(E^*)^+$. Thus, $\sum\limits_{j=1}^\infty x^*_j(|x_{n_j}|)=\sum\limits_{j=1}^\infty y^*_j(|x_j|)<\infty$, showing that $x^0\in\lambda_\pi(E)$. Given $x^*\in B_{\lambda'_\varepsilon(E^*)^+}$, since
 $$\|y^*\|_\varepsilon=\|(y^*)^0\|_\varepsilon=\|(x^*)^0\|_\varepsilon=\|x^*\|_\varepsilon,$$
 we have $y^*\in B_{\lambda'_\varepsilon(E^*)^+}$. Therefore,
\begin{equation*}\label{ineq.norm1}
\|x^0\|_\pi=\sup_{x^*\in B_{\lambda_\varepsilon(E)^+}}\sum_{j=1}^\infty x^*_j(|x_{n_j}|)\leq \sup_{y^*\in B_{\lambda_\varepsilon(E)^+}}\sum_{j=1}^\infty y^*_j(|x_j|)=\|x\|_\pi.
\end{equation*}
Conversely, let $x=(x_j)_{j=1}^\infty\in E^\mathbb{N}$ be such that $x^0=(x_{n_j})_{j=1}^\infty\in\lambda_\pi(E)$. 
Given $x^*=(x^*_j)_{j=1}^\infty\in\lambda'_\varepsilon(E^*)^+$, consider the subsequence 
$z^*=\sum\limits_{j=1}^\infty x^*_{n_j}\cdot e_j$ of $x^*$. By (a) we know that $z^*\in\lambda'_\varepsilon(E^*)^+$, then $\sum\limits_{j=1}^\infty x^*_j(|x_j|)=\sum\limits_{j=1}^\infty z^*_j(|x_{n_j}|)<\infty$, proving that $x\in\lambda_\pi(E)$. If  $x^*\in B_{\lambda_\varepsilon(E^*)^+}$, again by (a) we have that $z^*\in B_{\lambda_\varepsilon(E^*)^+}$, so
\begin{equation*}\label{ineq.norm2}
\|x\|_\pi=\sup_{x^*\in B_{\lambda'_\varepsilon(E^*)^+}}\sum_{j=1}^\infty x^*_j(|x_j|)
\leq\sup_{z^*\in B_{\lambda'_\varepsilon(E^*)^+}}\sum_{j=1}^\infty z^*_j(|x_{n_j}|)=\|x^0\|_\pi,
\end{equation*}
which proves that $\lambda_\pi(E)$ satisfies condition {\rm (i)} of Definition \ref{deffed}(b). The case of $\lambda_s(E)$ is similar.
\end{proof}


\begin{theorem}\label{mth} Let $E$ be a Banach lattice and let $\lambda$ be an invariant sequence lattice. Then $\lambda_\pi(E)\setminus \lambda_s(E)$ is either empty or completely latticeable.
\end{theorem}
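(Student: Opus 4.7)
The plan is to use the mother vector technique: fix a witness $x \in \lambda_\pi(E) \setminus \lambda_s(E)$ and build positive disjoint ``spread'' copies of $x$ whose closed sublattice avoids $\lambda_s(E)$ outside the origin.

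I would first reduce to a positive mother vector with full support. Since $\lambda_s(E)$ is a linear subspace and $x = x^+ - x^-$, at least one of $x^\pm$ lies outside $\lambda_s(E)$; rename it $x$, so $x \ge 0$. A finite-support sequence pairs with any $z^* \in \lambda'_w(E^*)$ to give a finite sum, so $\lambda_s(E)$ contains every such sequence; our $x$ must therefore have infinite support. By Lemma \ref{llema}(b) both $\lambda_\pi(E)$ and $\lambda_s(E)$ satisfy condition (i) of Definition \ref{deffed}(b), so replacing $x$ by its closing up $x^0$ is harmless, and I may assume $x_j > 0$ in $E$ for every $j$. I then partition $\mathbb{N} = \bigsqcup_{k=1}^\infty N_k$ into infinite sets $N_k = \{n_{k,1} < n_{k,2} < \cdots\}$ and define $y^{(k)} \in E^\mathbb{N}$ by $y^{(k)}_{n_{k,j}} := x_j$ and $y^{(k)}_n := 0$ for $n \notin N_k$. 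Since $(y^{(k)})^0 = x$, Lemma \ref{llema}(b) gives $y^{(k)} \in \lambda_\pi(E) \setminus \lambda_s(E)$ with $\|y^{(k)}\|_\pi = \|x\|_\pi$, and by construction the $y^{(k)}$ are positive, pairwise disjoint, and linearly independent.

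Let $F$ denote the closed sublattice of $\lambda_\pi(E)$ generated by $\{y^{(k)} : k \in \mathbb{N}\}$; it is infinite-dimensional. Since the $y^{(k)}$ are positive and disjoint, at each $n \in N_k$ only the $k$-th term is nonzero and $x_{j(n)} \ge 0$, so lattice operations on finite combinations $\sum_k \alpha_k y^{(k)}$ reduce to the corresponding scalar operations on the $\alpha_k$; hence the linear span of the $y^{(k)}$ is already a sublattice and $F$ is its $\|\cdot\|_\pi$-closure. By Lemma \ref{lemal}, $\|\cdot\|_\infty \le \|\cdot\|_\pi$, so every $y \in F$ is the coordinatewise $E$-limit of finite combinations $\sum_k \alpha_k^{(M)} y^{(k)}$; since $x_{j(n)} \ne 0$, each coefficient $\alpha_k^{(M)}$ must converge in $\mathbb{R}$ to a unique scalar $a_k$, yielding the coordinatewise representation $y = \sum_k a_k y^{(k)}$.

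To finish, I would take $y = \sum_k a_k y^{(k)} \in F \setminus \{0\}$, pick $k_0$ with $a_{k_0} \ne 0$, and choose $z^* \in \lambda'_w(E^*)$ witnessing $x \notin \lambda_s(E)$, that is, $\sum_j |z_j^*(x_j)| = \infty$. Spread $z^*$ onto $N_{k_0}$ by setting $w^*_{n_{k_0,j}} := z_j^*$ and $w^*_n := 0$ for $n \notin N_{k_0}$. Since $(w^*)^0 = (z^*)^0$ and $\lambda'_w(E^*)$ satisfies condition (i) of Definition \ref{deffed}(b) by Lemma \ref{llema}(a), one has $w^* \in \lambda'_w(E^*)$. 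Then
\[
\sum_{n=1}^\infty |w_n^*(y_n)| = \sum_{j=1}^\infty |z_j^*(a_{k_0} x_j)| = |a_{k_0}| \sum_{j=1}^\infty |z_j^*(x_j)| = \infty,
\]
so $y \notin \lambda_s(E)$, proving $F \setminus \{0\} \subset \lambda_\pi(E) \setminus \lambda_s(E)$. The main obstacle is the structural identification of $F$ in the previous paragraph: it is Lemma \ref{lemal} that lets one pass from norm convergence to coordinatewise convergence and extract the scalar coefficients $a_k$, without which the witness-spreading trick cannot be applied to a general limit element.
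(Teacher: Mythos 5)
Your proposal is correct and follows essentially the same route as the paper: a positive full-support mother vector, disjoint spread copies over a partition of $\mathbb{N}$, Lemma \ref{llema} to keep the copies in $\lambda_\pi(E)\setminus\lambda_s(E)$, Lemma \ref{lemal} to pass from $\|\cdot\|_\pi$-convergence to coordinatewise convergence and extract the scalar coefficients, and the spread witness functional to exclude $\lambda_s(E)$. The only cosmetic difference is packaging: the paper realizes the sublattice as $\overline{T(\ell_1)}$ for an injective Riesz homomorphism $T\colon\ell_1\to\lambda_\pi(E)$, while you take the closed sublattice generated by the spread vectors, which is the same set.
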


\begin{proof} Suppose that $\lambda_\pi(E)\setminus\lambda_s(E)\neq\emptyset$ and pick $x\in\lambda_\pi(E)\setminus \lambda_s(E)$. Writing $x=x^+-x^-$, as $\lambda_s(E)$ is a linear subspace, $x^+$ or $x^-$ does not belong to $\lambda_s(E)$, meaning that we can suppose $x$ to be positive, say $x=(x_j)_{j=1}^\infty\in\lambda_\pi(E)^+\setminus \lambda_s(E)$. Since  $x\notin\lambda_s(E)$, the set $\{j\in\mathbb{N};~x_j\neq 0\}$ is infinite. By Lemma \ref{llema}, $x\in\lambda_\pi(E)\Leftrightarrow x^0\in\lambda_\pi(E)$ and $x\in\lambda_s(E)\Leftrightarrow x^0\in\lambda_s(E)$, so we can assume, without loss of generality, that $x_j\neq 0$ for every $j\in\mathbb{N}$.

Split $\mathbb{N}=\textstyle\bigcup\limits_{i=1}^\infty\mathbb{N}_i$ into infinitely many pairwise disjoint subsets $\mathbb{N}_i=\{i_1<i_2<\cdots\}$, $i \in \mathbb{N}$. Define $y_i=\sum\limits_{j=1}^\infty x_j\cdot e_{i_j}$ for every $i\in\mathbb{N}$ and consider the operator
$$T\colon\ell_1\longrightarrow \lambda_\pi(E)~,~ T\left(
    (a_i)_{i=1}^\infty\right) = \sum_{i=1}^\infty a_iy_i.$$
Each $y_i^0=x$, so $y_i\in\lambda_\pi(E)^+\setminus \lambda_s(E)$ for every $i\in\mathbb{N}$ by Lemma \ref{llema}. For $(a_i)_{i=1}^\infty\in\ell_1$,
$$
\sum_{i=1}^\infty\|a_iy_i\|_\pi=\sum_{i=1}^\infty|a_i|\cdot \|y_i^0\|_\pi= \|x\|_\pi \cdot \sum_{i=1}^\infty|a_i|<\infty,
$$
showing that $T$ is well defined because $\lambda_\pi(E)$ is a Banach space. It is obvious that $T$ is linear, let us show its injectivity: for any $(a_i)_{i=1}^\infty\in\ell_1$,
$$
T\left((a_i)_{i=1}^\infty\right)= \sum_{i=1}^\infty a_iy_i=\sum_{i=1}^\infty a_i\left(\sum_{j=1}^\infty x_j\cdot e_{i_j}\right)=  \sum_{i=1}^\infty\left(\sum_{j=1}^\infty a_ix_j\cdot e_{i_j}\right).
$$
Since the sets $\mathbb{N}_i$ are pairwise disjoint, each coordinate of $T\left((a_i)_{i=1}^\infty\right)$ is of the form $a_ix_j$ for some $i,j\in\mathbb{N}$. Since $x_j\neq 0$ for every $j\in\mathbb{N}$, we have
$T\left((a_i)_{i=1}^\infty\right)=0$ if and only if $(a_i)_{i=1}^\infty=0$.

Let us check now that $T$ is a Riesz homomorphism: it is clear that $y_i\perp y_k$ para all $i\neq k$, so, for any $(a_i)_{i=1}^\infty\in\ell_1$,
\begin{align*}
T\left(\left|(a_i)_{i=1}^\infty\right|\right)&=\sum_{i=1}^\infty |a_i|y_i \stackrel{(\star)}{=}\lim_{n\rightarrow\infty}
\sum_{i=1}^n|a_iy_i|\stackrel{(\star\star)}{=}
\lim_{n\rightarrow\infty}\left|\sum_{i=1}^na_iy_i\right|=\left|\sum_{i=1}^\infty a_iy_i\right|=\left|T\left((a_i)_{i=1}^\infty\right)\right|,
\end{align*}
where ($\star$) follows from the fact that each $y_i\geq 0$ and ($\star\star$) from \cite[Lemma 1.9]{aliprantis1}. 
It follows that $\overline{T(\ell_1)}$ is an infinite dimensional closed sublattice of $\lambda_\pi(E)$.

All that is left to prove is that $\overline{T(\ell_1)}\cap \lambda_s( E)=\{0\}.$  Given $0\neq z\in\overline{T(\ell_1)}$, take $(a_i^{(k)})_{i=1}^\infty\in\ell_1$, $k\in\mathbb{N}$, such that $\lim\limits_{k\rightarrow \infty} T\left((a_i^{(k)})_{i=1}^\infty\right)=z$ in $\lambda_\pi(E)$. For each $k\in\mathbb{N}$,
\begin{equation}\label{eq.imT}
T\left((a_i^{(k)})_{i=1}^\infty\right)= \sum_{i=1}^\infty a_i^{(k)}y_i=\sum_{i=1}^\infty \left(a_i^{(k)}\sum_{j=1}^\infty x_j\cdot e_{i_j}\right)=  \sum_{i=1}^\infty\left(\sum_{j=1}^\infty a^{(k)}_ix_j\cdot e_{i_j}\right).
\end{equation}
Taking $r\in\mathbb{N}$ such that $z_r\neq 0$, there are unique $m,t\in\mathbb{N}$ so that $e_{m_t}=e_r$. Consider the set $\mathbb{N}_m=\{m_1<m_2<\cdots\}$ and note that from (\ref{eq.imT}) it follows that, for all $j,k\in\mathbb{N}$ the $m_j$-th coordinate of $T\left((a^{(k)}_i)_{i=1}^\infty\right)$ is equal to $a_m^{(k)}x_j$. From Lemma \ref{lemal} we know that convergence in $\lambda_\pi(E)$ implies coordinatewise convergente, so $z_{m_j}=\lim\limits_{k\rightarrow\infty}a_m^{(k)}x_j=\left(\lim\limits_{k\rightarrow\infty}a_m^{(k)}\right)x_j$, for every $j\in\mathbb{N}$. Writing $a_m=\lim\limits_{k\rightarrow\infty}a_m^{(k)}$ we get $z_{m_j}=a_mx_j$ for every $j\in\mathbb{N}$ and $a_m\neq 0$ since $a_mx_t=z_{m_t}=z_r\neq 0$.

Once $x\not\in\lambda_s(E)$, there exists  $(\varphi_j)_{j=1}^\infty\in\lambda'_w(E^*)$ such that $\sum\limits_{j=1}^\infty|\varphi_j(x_j)|=\infty$. Defining $\psi=\sum_{j=1}^\infty\varphi_j\cdot e_{m_j} \in \lambda'_w(E^*)$, we have
$$
\sum_{j=1}^\infty|\psi_j(z_j)|= \sum_{j=1}^\infty|\psi_{m_j}(z_{m_j})|=\sum_{j=1}^\infty|\varphi_j(a_m x_j)|=|a_m|\cdot\sum_{j=1}^\infty|\varphi_j(x_j)|=\infty,
$$
proving that $z\not\in\lambda_s(E)$.
\end{proof}


When $\lambda = \ell_\varphi$, where $\varphi$ is an Orlicz function, we write $\ell^\pi_\varphi(E):= \lambda_\pi(E)$ and $\ell^s_\varphi(E):= \lambda_s(E)$.   Combining Example \ref{exampless}(c) with Theorem \ref{mth} we get the following.

\begin{corollary}\label{ret.orlicz} Let $E$ be a Banach lattice and $\varphi$ be an Orlicz function satisfying the $\Delta_2$-condition and admitting a  complementary function. Then $\ell^\pi_\varphi(E)\setminus\ell^s_\varphi(E)$ is either empty or completely latticeable.
\end{corollary}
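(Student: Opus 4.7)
The plan is to obtain this corollary as an immediate specialization of Theorem \ref{mth} to the sequence space $\lambda = \ell_\varphi$. The entire content of the statement is that $\ell_\varphi$ fits into the hypotheses of Theorem \ref{mth}, after which the conclusion transfers to the vector-valued setting via the very notation $\ell^\pi_\varphi(E)=\lambda_\pi(E)$ and $\ell^s_\varphi(E)=\lambda_s(E)$.

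Concretely, the first (and only) step is to recall from Example \ref{exampless}(c) that, under the stated hypotheses on $\varphi$ (the $\Delta_2$-condition and the existence of a complementary function $\varphi^*$), the Orlicz sequence space $\ell_\varphi$ is an invariant sequence lattice in the sense of Definition \ref{deffed}(b). Indeed, that example assembles the three ingredients: $\ell_\varphi$ is a KB-space with $\|e_j\|_{\ell_\varphi}=1$; its K\"othe dual is $\ell_{\varphi^*}$, which is symmetric in the sense of \cite{carando}; and hence by Example \ref{exampless}(a), $\ell_{\varphi^*}=(\ell_\varphi)'$ enjoys conditions (i) and (ii) of Definition \ref{deffed}(b).

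With $\ell_\varphi$ now verified to be an invariant sequence lattice, I would apply Theorem \ref{mth} with $\lambda=\ell_\varphi$. By the convention $\ell^\pi_\varphi(E):=\lambda_\pi(E)$ and $\ell^s_\varphi(E):=\lambda_s(E)$ fixed right before the statement of the corollary, the conclusion of Theorem \ref{mth} translates verbatim into: $\ell^\pi_\varphi(E)\setminus\ell^s_\varphi(E)$ is either empty or completely latticeable. This closes the proof.

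I do not anticipate any obstacle here, since all the work has already been done: the nontrivial mother-vector construction lives in Theorem \ref{mth}, and the verification that $\ell_\varphi$ is an invariant sequence lattice is packaged into Example \ref{exampless}(c). The only thing one might wish to add is a remark that the hypothesis set is nonvacuous and that in many standard situations the set difference is actually nonempty (e.g.\ for $\varphi(t)=t^p$ with $1<p<\infty$ and suitable $E$, via the identifications $\ell^\pi_p(E)\cong\ell_p\widehat{\otimes}_{|\pi|}E$ and $\ell_p\langle E\rangle\cong\ell_p\widehat{\otimes}_\pi E$ recalled in the text), but this is cosmetic and not required for the corollary itself.
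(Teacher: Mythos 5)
Your proposal is correct and coincides with the paper's own argument: the corollary is obtained by noting, via Example \ref{exampless}(c), that $\ell_\varphi$ is an invariant sequence lattice under the stated hypotheses on $\varphi$, and then applying Theorem \ref{mth} with $\lambda=\ell_\varphi$, using the notational conventions $\ell^\pi_\varphi(E)=\lambda_\pi(E)$ and $\ell^s_\varphi(E)=\lambda_s(E)$. Nothing further is needed.
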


For $\lambda = \ell_p$, $1 < p < \infty$, we can go a bit further.

\begin{corollary}\label{fcorf} Let $E$  be a Banach lattice not isomorphic to an AL-space and  $1< p<\infty$. Then $\ell^\pi_p(E)\setminus\ell_p\langle E\rangle$ is completely latticeable.
\end{corollary}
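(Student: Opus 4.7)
The plan is to combine Theorem \ref{mth} with a separate nonemptiness argument. Since $\ell_p$ is an invariant sequence lattice by Example \ref{exampless}(b), Theorem \ref{mth} already guarantees that $\ell_p^\pi(E)\setminus \ell_p\langle E\rangle$ is either empty or completely latticeable. Hence the only thing left to establish is that, under the additional hypothesis that $E$ is not isomorphic to an AL-space, this set is nonempty, i.e., the natural inclusion $\ell_p\langle E\rangle\subseteq\ell_p^\pi(E)$ from \cite[Proposition 5.2]{bu2} is strict.

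To show nonemptiness I would argue by contradiction, following the blueprint of Lemma \ref{lemmaidfech}. Assume $\ell_p^\pi(E)=\ell_p\langle E\rangle$. Since $\|\cdot\|_\pi\leq\|\cdot\|_s$ by \cite[Proposition 5.2]{bu2}, the identity map $(\ell_p\langle E\rangle,\|\cdot\|_s)\to (\ell_p^\pi(E),\|\cdot\|_\pi)$ is a continuous bijection between Banach spaces, so the Open Mapping Theorem forces $\|\cdot\|_\pi$ and $\|\cdot\|_s$ to be equivalent on this common underlying space. Using the isometric identifications $\ell_p\langle E\rangle\cong \ell_p\widehat{\otimes}_\pi E$ and $\ell_p^\pi(E)\cong \ell_p\widehat{\otimes}_{|\pi|}E$ recalled in the paper, the Banach space $\ell_p\widehat{\otimes}_\pi E$ is then isomorphic to the Banach lattice $\ell_p\widehat{\otimes}_{|\pi|}E$. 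Passing to biduals and invoking \cite[p.\,59]{johnson} exactly as in Lemma \ref{lemmaidfech}, one concludes that $\ell_p\widehat{\otimes}_\pi E$ enjoys Gordon-Lewis local unconditional structure.

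The main obstacle, and the step where the hypothesis on $E$ must actually be used, is extracting a contradiction from this conclusion. What is needed is the general fact that, for $1<p<\infty$ and a Banach lattice $E$ not isomorphic to an AL-space, $\ell_p\widehat{\otimes}_\pi E$ fails Gordon-Lewis local unconditional structure. This is the Banach-lattice-valued extension of the case $E=\ell_q$ handled via \cite[Corollary 3.6]{gordon} in Lemma \ref{lemmaidfech}, and is the only nonroutine ingredient of the argument. Granting this extension, the assumption that $E$ is not isomorphic to an AL-space is contradicted, so $\ell_p^\pi(E)\setminus\ell_p\langle E\rangle\neq\emptyset$, and Theorem \ref{mth} then upgrades this to complete latticeability, finishing the proof.
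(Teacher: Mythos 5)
Your reduction is exactly the paper's: invoke Theorem \ref{mth} (since $\ell_p$ is an invariant sequence lattice), so only nonemptiness of $\ell_p^\pi(E)\setminus\ell_p\langle E\rangle$ needs proof, and for that assume equality, apply the Open Mapping Theorem, and translate via the isometric identifications into the statement that $\ell_p\widehat{\otimes}_\pi E$ and $\ell_p\widehat{\otimes}_{|\pi|}E$ are isomorphic Banach spaces. Up to this point the proposal matches the paper's argument.

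The problem is the final step, which is precisely where the hypothesis that $E$ is not isomorphic to an AL-space has to act. You propose to follow the blueprint of Lemma \ref{lemmaidfech} and derive a contradiction from Gordon--Lewis local unconditional structure, but the ingredient you need --- that $\ell_p\widehat{\otimes}_\pi E$ fails GL-lust for \emph{every} Banach lattice $E$ not isomorphic to an AL-space --- is not proved in your argument, is not in the cited literature (\cite[Corollary 3.6]{gordon} covers only $E=\ell_q$, which is why Lemma \ref{lemmaidfech} is stated just for $\ell_q$), and is not a routine extension; it is the entire nontrivial content of the nonemptiness claim, so "granting" it leaves a genuine gap, and it is not even clear the GL-lust route can work for an arbitrary non-AL Banach lattice $E$ (e.g.\ AM-spaces are not handled by any result you quote). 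The paper closes the argument differently and without any lust considerations: once $\ell_p\widehat{\otimes}_\pi E$ and $\ell_p\widehat{\otimes}_{|\pi|}E$ are isomorphic, \cite[Proposition 42]{bu5} says that this forces $\ell_p$ or $E$ to be isomorphic to an AL-space, which is false for $1<p<\infty$ and for $E$ by hypothesis. Replacing your granted "general fact" by this citation repairs the proof; as written, the proposal is incomplete at exactly that step.
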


\begin{proof} Bearing Example \ref{exampless}(c) and Theorem \ref{mth} in mind, it is enough to check that $\ell_p\langle E \rangle \neq \ell_p^\pi(E)$. Since the inclusion operator $i \colon \ell_p\langle E \rangle \hookrightarrow \ell_p^\pi(E)$ is continuous, supposing $\ell_p\langle E \rangle =  \ell_p^\pi(E)$ we have by the Open Mapping Theorem that the two spaces are isomorphic. As mentioned earlier, $\ell_p\langle E \rangle$ is isomorphic to $\ell_p\widehat{\otimes}_{\pi}E$ and $\ell_p^\pi(E)$ is (lattice) isomorphic to $\ell_p\widehat{\otimes}_{|\pi|}E$, so the Banach spaces $\ell_p\widehat{\otimes}_{\pi}E$ and $\ell_p\widehat{\otimes}_{|\pi|}E$ are isomorphic. From \cite[Proposition 42]{bu5} it follows that $\ell_p$ or $E$ is isomorphic to an AL-space. This contradiction completes the proof.
\end{proof}

Let us stress how we are going to regard $\ell_p\widehat{\otimes}_{\pi}E$ as a linear subspace of $\ell_p\widehat{\otimes}_{|\pi|}E$. It is clear that
$$A \colon \ell_p \times E \longrightarrow \ell_p\widehat{\otimes}_{|\pi|}E~,~A(t,y) = t\otimes y, $$
is a continuous bilinear operator. So, its linearization
$$A_L \colon \ell_p\widehat{\otimes}_{\pi}E \longrightarrow \ell_p\widehat{\otimes}_{|\pi|}E$$
is a bounded linear operator such that $A_L(t \otimes x) = t \otimes x$ for all $t \in \ell_p$ and $x \in E$. Both the isometric isomorphism $ S \colon \ell_p\widehat{\otimes}_{\pi}E {\longrightarrow} \ell_p\langle E\rangle$ from \cite{bu4} and the lattice isometric isomorphism $ T \colon \ell_p\widehat{\otimes}_{|\pi|}E {\longrightarrow} \ell_p^\pi( E)$ from \cite{bu1} send $(t_j)_{j=1}^\infty \otimes x$ to $(t_jx)_{j=1}^\infty$. So, $A_L$ and $T^{-1} \circ i \circ S$ coincide on the elementary tensors $t \otimes x$. The uniqueness of the linearization of a continuous bilinear operator gives that $A_L =T^{-1} \circ i \circ S$, from which it follows that $A_L$ is injective. Therefore we can identify, as vector spaces, $\ell_p\widehat{\otimes}_{\pi}E$ with the linear subspace $A_L(\ell_p\widehat{\otimes}_{\pi}E)$ of $\ell_p\widehat{\otimes}_{|\pi|}E$ in such a way that $A_L$ restricted to $\ell_p \otimes E$ is the identity operator. 

The following lemma is straightforward.

\begin{lemma}\label{ammel} Let $u \colon E \longrightarrow F$ be a lattice isomorphism (Riesz homomorphism $+$ Banach space isomorphism) between Banach lattices, let $X$ be a subset of $E$ and $Y$ be a subset of $F$ so that $Y\subset u(X)$. If $E\setminus X$ is completely latticeable then $F\setminus Y$ is completely latticeable too.
\end{lemma}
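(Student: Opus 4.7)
The plan is to transport the sublattice from $E$ to $F$ via $u$, using only the three basic properties of $u$: it is a Riesz homomorphism, a linear bijection, and a topological homeomorphism (being a Banach space isomorphism).

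First I would invoke the hypothesis: since $E \setminus X$ is completely latticeable, there exists a closed infinite dimensional sublattice $G$ of $E$ with $G \setminus \{0\} \subseteq E \setminus X$, equivalently, $G \cap X \subseteq \{0\}$. The candidate sublattice of $F$ is then $u(G)$.

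Next I would verify the three structural properties of $u(G) \subseteq F$. It is a sublattice because $u$ is a Riesz homomorphism, so $u(G)$ is closed under the lattice operations inherited from $F$. It is infinite dimensional because $u$ is injective and linear. It is closed in $F$ because $u$ is a homeomorphism onto $F$ (being a Banach space isomorphism), hence $u(G)$ is closed whenever $G$ is.

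The only remaining point is the disjointness from $Y$, i.e., $u(G) \setminus \{0\} \subseteq F \setminus Y$. Take any $y \in u(G) \cap Y$. From $y \in Y \subseteq u(X)$ there is $x \in X$ with $u(x) = y$, and from $y \in u(G)$ there is $g \in G$ with $u(g) = y$. Injectivity of $u$ yields $x = g$, hence $g \in G \cap X \subseteq \{0\}$, so $y = u(g) = 0$. This gives $u(G) \cap Y \subseteq \{0\}$, completing the argument. There is no real obstacle here; everything reduces to the fact that a lattice isomorphism transports closed sublattices to closed sublattices and reflects set-theoretic containments via the preimage, which is why the paper flags the lemma as straightforward.
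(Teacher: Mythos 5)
Your proof is correct and is precisely the routine transport argument the authors had in mind when they declared the lemma straightforward (the paper itself gives no written proof): push the witnessing closed sublattice $G$ forward to $u(G)$ and use injectivity together with $Y\subseteq u(X)$ to get $u(G)\cap Y\subseteq\{0\}$. Nothing is missing.
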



Regarding $\ell_p\widehat{\otimes}_{\pi}E$ as a linear subspace of $\ell_p\widehat{\otimes}_{|\pi|}E$ as above, we have:

\begin{corollary} Let $E$  be a Banach lattice not isomorphic to an AL-space and  $1< p<\infty$. Then $(\ell_p\widehat{\otimes}_{|\pi|}E)\setminus(\ell_p\widehat{\otimes}_{\pi}E)$ is completely latticeable.
\end{corollary}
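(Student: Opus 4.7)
The plan is to deduce this corollary from Corollary \ref{fcorf} by transporting complete latticeability through the lattice isometric isomorphism $T \colon \ell_p\widehat{\otimes}_{|\pi|}E \longrightarrow \ell_p^\pi(E)$ from \cite{bu1}, using Lemma \ref{ammel} as the transfer mechanism.

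Concretely, I would apply Lemma \ref{ammel} with $u := T^{-1}$, with $X := \ell_p\langle E\rangle$ inside $\ell_p^\pi(E)$, and with $Y := A_L(\ell_p\widehat{\otimes}_{\pi}E)$ inside $\ell_p\widehat{\otimes}_{|\pi|}E$, where $A_L$ is the canonical injection identified in the paragraph preceding the corollary. The hypothesis that $\ell_p^\pi(E)\setminus \ell_p\langle E\rangle$ is completely latticeable is precisely Corollary \ref{fcorf}, so the only task left is to verify the inclusion $Y \subseteq u(X)$.

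The key input for this verification is the identity $A_L = T^{-1} \circ i \circ S$ established just before the corollary, where $i$ denotes the inclusion $\ell_p\langle E\rangle \hookrightarrow \ell_p^\pi(E)$ and $S \colon \ell_p\widehat{\otimes}_{\pi}E \longrightarrow \ell_p\langle E\rangle$ is the isometric isomorphism from \cite{bu4}. Applying this identity one gets
$$Y = A_L(\ell_p\widehat{\otimes}_{\pi}E) = T^{-1}\bigl(i\bigl(S(\ell_p\widehat{\otimes}_{\pi}E)\bigr)\bigr) = T^{-1}(\ell_p\langle E\rangle) = u(X),$$
so in fact $Y = u(X)$, and Lemma \ref{ammel} at once delivers the complete latticeability of $(\ell_p\widehat{\otimes}_{|\pi|}E)\setminus(\ell_p\widehat{\otimes}_{\pi}E)$.

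I do not foresee a substantial obstacle, since the hard work is carried by Corollary \ref{fcorf} and the identification $A_L = T^{-1} \circ i \circ S$ was arranged precisely so that this transfer is immediate; the only delicate point is the bookkeeping confirming that the way $\ell_p\widehat{\otimes}_{\pi}E$ is viewed as a subspace of $\ell_p\widehat{\otimes}_{|\pi|}E$ coincides, via $T$, with the way $\ell_p\langle E\rangle$ sits inside $\ell_p^\pi(E)$, which is exactly what the uniqueness of the linearization of the bilinear operator $A$ guarantees.
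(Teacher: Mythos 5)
Your proposal is correct and follows essentially the same route as the paper: it invokes Lemma \ref{ammel} with $u=T^{-1}$, $X=\ell_p\langle E\rangle$ and $Y=A_L(\ell_p\widehat{\otimes}_{\pi}E)$, feeds in Corollary \ref{fcorf}, and verifies the inclusion via the identity $A_L = T^{-1}\circ i\circ S$, exactly as the paper does. No gaps to report.
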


\begin{proof} Let $i$, $A_L$, $S$ and $T$ be the operators of the paragraph before the lemma. Then $T^{-1} \colon \ell_p^\pi(E) \longrightarrow \ell_p\widehat{\otimes}_{|\pi|}E$ is a lattice isometric isomorphism, $\ell_p\langle E\rangle$ is a subspace of $\ell_p^\pi(E)$, $\ell^\pi_p(E)\setminus\ell_p\langle E\rangle$ is completely latticeable by Theorem \ref{fcorf} and $A_L(\ell_p\widehat{\otimes}_{\pi}E)$ is the subspace of $\ell_p\widehat{\otimes}_{|\pi|}E$ we are identifying with $\ell_p\widehat{\otimes}_{\pi}E$. Since
$$A_L(\ell_p\widehat{\otimes}_{\pi}E) = (T^{-1} \circ i \circ S)(\ell_p\widehat{\otimes}_{\pi}E) = T^{-1}(S(\ell_p\widehat{\otimes}_{\pi}E))= T^{-1}(\ell_p\langle E \rangle), $$
Lemma \ref{ammel} gives the complete latticeability of $(\ell_p\widehat{\otimes}_{|\pi|}E)\setminus A_L(\ell_p\widehat{\otimes}_{\pi}E)$.
\end{proof}

\bigskip

\noindent Faculdade de Matem\'atica~~~~~~~~~~~~~~~~~~~~~~Departamento de Matem\'atica\\
Universidade Federal de Uberl\^andia~~~~~~~~ IMECC-UNICAMP\\
38.400-902 -- Uberl\^andia -- Brazil~~~~~~~~~~~~ 13.083-859 -- Campinas -- Brazil\\
e-mail: botelho@ufu.br ~~~~~~~~~~~~~~~~~~~~~~~~~e-mail: lucasvt09@hotmail.com

\end{document}